\address{\newline{\normalsize Courant Institute, NYU, 251 Mercer str., New York, NY 10012, USA}\newline{\it E-mail address}:
karzhema@cims.nyu.edu}
\makeatletter\@addtoreset{equation}{section}\makeatother
\makeatletter\@addtoreset{subsection}{equation}\makeatother
\newtheorem{theorem}[equation]{Theorem}
\newtheorem{proposition}[equation]{Proposition}
\newtheorem{lemma}[equation]{Lemma}
\newtheorem{corollary}[equation]{Corollary}
\newtheorem{theorem-definition}[equation]{Theorem\,-\,definition}
\theoremstyle{definition}
\newtheorem{example}[equation]{Example}
\newtheorem{definition}[equation]{Definition}
\theoremstyle{remark}
\newtheorem{remark}[equation]{Remark}
\thanks{{\it MS 2010 classification}: 14J28, 14J15, 14M20}
\thanks{{\it Key words}: $\mathrm{K3}$ surface, moduli space, unirationality}
\begin{document}

\title{On polarized K3 surfaces of genus 33}

\author{Ilya Karzhemanov}

\begin{abstract}
We prove that the moduli space of smooth primitively polarized
$\mathrm{K3}$ surfaces of genus $33$ is unirational.
\end{abstract}

\sloppy

\maketitle

\bigskip

\section{Introduction}
\label{section:introduction}

Let $S$ be a $\mathrm{K3}$ surface (i.\,e. smooth projective
simply\,-\,connected surface with trivial first Chern class). One
may regard such surfaces as two\,-\,dimensional counterparts of
elliptic curves. In fact, $\mathrm{K3}$ surfaces turn out to be
favourably endowed with geometric, arithmetic and
group\,-\,theoretic properties (cf. \cite{saint},
\cite{bog-tschinkel}, \cite{nik-1}, \cite{nik-2},
\cite{dolgachev-kondo}).

In the present paper, we study birational geometry of the moduli
space of \emph{primitively polarized} $\mathrm{K3}$ surfaces over
$\mathbb{C}$. Namely, we consider the pairs $(S, L)$, where $S$ is
a complex $\mathrm{K3}$ surface and $L$ is an ample divisor on $S$
($L$ is called \emph{polarization} of $S$) corresponding to some
primitive vector in the Picard lattice $\mathrm{Pic}(S)$. Once the
integer $g := (L^{2})/2 + 1$ (called the \emph{genus} of $(S, L)$)
is fixed, the pairs $(S, L)$, when considered up to isomorphisms
preserving $L$, are parameterized by the moduli space
$\mathcal{K}_g$.

Recall that $\mathcal{K}_g$ is a quasi\,-\,projective algebraic
variety (see \cite{bayle-borel}, \cite{viehweg}). In particular,
one may study such basic questions of birational geometry for
$\mathcal{K}_g$ as rationality, unirationality, rational
connectedness, Kodaira dimension estimate, etc.

S.\,Mukai's vector bundle method, developed in order to classify
higher\,-\,dimensional Fano manifolds of Picard number $1$ and
coindex $3$ (see \cite{mukai-1}, \cite{mukai-2}), allowed him to
prove unirationality of $\mathcal{K}_{g}$ for $g \in \{2, \ldots,
10, 12, 13, 18, 20\}$ (see \cite{mukai-3}, \cite{mukai-4},
\cite{mukai-5}, \cite{mukai-6}, \cite{mukai}). At the same time,
variety $\mathcal{K}_{g}$ turned out to be non\,-\,unirational for
$g \geqslant 41$, with $g \ne 42, 45, 46$ and $48$ (see
\cite{gritsenko-hulek-sancaran}, \cite{kondo-1}, \cite{kondo-2}).

In the present paper, applying the methods (mainly) from
\cite{mukai-1} and \cite{mukai}, we prove the following:

\begin{theorem}
\label{theorem:main} The moduli space $\mathcal{K}_{33}$ is
unirational.
\end{theorem}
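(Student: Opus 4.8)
The plan is to carry out Mukai's vector bundle method, realizing the general member of $\mathcal{K}_{33}$ inside a fixed rational ambient variety. Let $(S,L)$ be a general polarized $\mathrm{K3}$ surface of genus $33$, so that $L^{2} = 64$ and $\mathrm{Pic}(S) = \mathbb{Z}\cdot L$. First I would look for a \emph{rigid} (spherical) stable vector bundle $E$ on $S$ with $c_{1}(E) = L$ whose Mukai vector $v = (r, L, s)$ satisfies $\langle v, v\rangle = L^{2} - 2rs = -2$, i.e.\ $rs = 33$. The factorization $33 = 3\cdot 11$ singles out the rank $r = 3$ with $s = 11$, for which $\chi(E) = r + s = 14$. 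By Mukai's existence and uniqueness theorem for spherical bundles on $\mathrm{K3}$ surfaces of Picard number one, such an $E$ exists and is unique; stability forces $H^{2}(E) = H^{0}(E^{\vee})^{*} = 0$, and rigidity gives $\mathrm{Ext}^{1}(E,E) = 0$, so that, once the vanishing $H^{1}(E) = 0$ is established (e.g.\ from global generation together with a Kodaira-type argument), one has $h^{0}(E) = 14$.

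Next I would prove that $E$ is globally generated and that the evaluation of sections identifies $S$ with a subvariety of the Grassmannian $G := \mathrm{Gr}(3, H^{0}(E)^{*}) = \mathrm{Gr}(3,14)$, of dimension $33$; the Plücker line bundle $\mathcal{O}_{G}(1) = \det Q$ (with $Q$ the tautological rank-$3$ quotient) pulls back to $\det E = L$, and the class $[\phi_{E}(S)]$ in the Chow ring of $G$ is computed from the Chern classes of $E$. The crux is to describe $S$ inside $G$ explicitly enough to vary it in a rational family. Since the codimension is $33 - 2 = 31$, a naive linear section of $G$ in its Plücker embedding cannot produce $S$, so instead I would realize $S$ as the zero scheme of a regular section of an explicit globally generated homogeneous bundle $\mathcal{F}$ of rank $31$ on $G$, or, more generally, as the degeneracy locus $D(\psi)$ of a general morphism $\psi\colon \mathcal{A}\to\mathcal{B}$ of homogeneous bundles of the expected codimension. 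Here Riemann--Roch on $S$, vanishing theorems, and the Borel--Weil computation of the cohomology of homogeneous bundles on $G$ are used to match the relevant space of sections with the deformations of $S$ and to guarantee that the locus has the expected dimension and cohomology.

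The parameter space of the construction is then transparently rational: the morphisms $\psi$ (respectively the sections of $\mathcal{F}$) form the vector space $\mathrm{Hom}(\mathcal{A},\mathcal{B}) = H^{0}(G, \mathcal{A}^{\vee}\otimes\mathcal{B})$, and the assignment $\psi\mapsto \bigl(D(\psi),\, \mathcal{O}_{G}(1)|_{D(\psi)}\bigr)$ defines a rational map $\Phi$ from this affine space to $\mathcal{K}_{33}$. As the source is rational, unirationality of $\mathcal{K}_{33}$ follows once $\Phi$ is shown to be dominant. Dominance is a consequence of the uniqueness of $E$: every general $(S,L)$ is recovered by the above procedure, hence lies in the image, so $\Phi$ hits a dense subset. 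A dimension count, in which the generic fiber of $\Phi$ is accounted for by the action of the connected rational group $\mathrm{Aut}(G) = \mathrm{PGL}_{14}$, confirms that the source maps onto the $19$-dimensional $\mathcal{K}_{33}$ and closes the argument.

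The step I expect to be the main obstacle is precisely this explicit realization together with the dominance. Proving global generation and the vanishing $H^{1}(E) = 0$ on the \emph{general} $S$, and then pinning down the zero-locus (or degeneracy) description so that $D(\psi)$ equals $S$ with no extraneous components and the expected invariants, is delicate, especially given the large codimension $31$. Equally hard is the reverse direction: running a Bertini/transversality argument on $G$ to show that a \emph{general} $\psi$ produces a smooth $\mathrm{K3}$ surface, while controlling the restriction $H^{0}(G,\mathcal{O}_{G}(1))\to H^{0}(S,L)$ so that the resulting polarization is primitive of the correct degree $64$. Demonstrating that the rational family genuinely sweeps out an open part of moduli, rather than a proper subvariety, is where the real difficulty lies.
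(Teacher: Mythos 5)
Your proposal correctly identifies the first stage of Mukai's method for $g=33$: the factorization $33=3\cdot 11$, the rigid rank-$3$ bundle $E$ with $\chi(E)=14$, and the resulting embedding $S\hookrightarrow G(3,14)$. But everything after that is a template rather than a proof, and the template is filled in at exactly the point where the theorem's entire content lies. You propose to exhibit $S$ as the zero scheme of a section of an unspecified globally generated homogeneous bundle $\mathcal{F}$ of rank $31$ on $G(3,14)$ (or as a degeneracy locus of unspecified homogeneous bundles), and you yourself flag this realization, together with dominance, as ``the main obstacle.'' No such homogeneous presentation is known for genus $33$ --- if one existed it would be a new Mukai model, and the known models stop far short of this range --- so the argument as written has a genuine gap: the object on which the whole unirationality claim rests is never produced. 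Your dominance argument is also circular at this point, since ``every general $(S,L)$ is recovered by the above procedure'' presupposes the very presentation that is missing.

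The paper closes this gap by a mechanism that is deliberately \emph{not} homogeneous and does not start from a Picard-rank-one surface. It takes a special $S$ with $\mathrm{Pic}(S)=\mathbb{Z}H\oplus\mathbb{Z}C$ ($H^2=70$, $H\cdot C=2$, $C^2=-2$), so that $H-4C$ is a genus-$12$ polarization and $H-C$ a genus-$33$ one; the genus-$33$ bundle is obtained by twisting the genus-$12$ Mukai bundle, $\tilde E_3=E_3\otimes\mathcal{O}_S(C)$, and $H^0(S,\tilde E_3)$ splits as $t\cdot H^0(S,E_3)\oplus\langle\xi_1,\dots,\xi_7\rangle$. The equations of $S$ in $G(3,14)$ are then the pullbacks of Mukai's genus-$12$ equations $\lambda,\sigma_1,\sigma_2,\sigma_3$ together with a \emph{fixed} linear space $\Lambda$ recording the relations $\xi_i\wedge ts_j\wedge ts_k=t^3\lambda_i$ (Proposition~\ref{theorem:key-lemma}, Corollary~\ref{theorem:key-cor}); moreover $S$ is only the closure of the part of this locus off a hyperplane $\Pi$, so the ``no extraneous components'' requirement you impose is neither achieved nor needed. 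The rational parameter space is then not a space of sections of a bundle on $G(3,14)$ but the genus-$12$ data $(\hat\lambda,\hat\sigma_r)$ itself, i.e.\ $\mathcal{K}_{12}$, which is unirational by Mukai; dominance onto $\mathcal{K}_{33}$ comes from deforming the coefficients in a flat family through $S$, not from recovering an arbitrary Picard-rank-one surface directly. To repair your write-up you would need either to supply the homogeneous bundle $\mathcal{F}$ (not available) or to switch to a two-step construction of this kind.
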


Let us briefly outline the proof of Theorem~\ref{theorem:main}. It
follows from \cite[Corollary 1.5]{karz-3} (cf.
Proposition~\ref{theorem:r-polarized} below) that there exists a
$\mathrm{K3}$ surface $S$ such that $\mathrm{Pic}(S)$ is generated
by some very ample divisor $H$ and a $(-2)$\,-\,curve $C$,
satisfying $(H^2) = 70$ and $H \cdot C = 2$. Thus $(S, H)$ is a
primitively polarized $\mathrm{K3}$ surface of genus $36$. (Note
that all such $(S,H)$ form a hypersurface
$\mathcal{K}_{36}^{\,\text{R}}$ in $\mathcal{K}_{36}$.)
Furthermore, the divisors $H - kC$, $1 \leqslant k\leqslant 4$,
also provide primitive polarizations on $S$ (see
Lemma~\ref{theorem:polarizations} and
Remark~\ref{remark:direct-proof}), so that the surfaces $(S, H -
kC)$ are \emph{BN general} (see
Definition~\ref{definition:bn-general},
Lemma~\ref{theorem:bn-general} and
Remark~\ref{remark:direct-proof-1}). In particular, according to
S.\,Mukai there exists a \emph{rigid} vector bundle $E_3$ on $S$
of rank $3$, such that the first Chern class $c_1(E_3)$ is equal
to $H - 4C$ and $\dim H^{0}(S, E_{3}) = 7$ (see
Theorem\,-\,definition~\ref{theorem:existence-if-rigid-v-b}). This
$E_3$ is unique and determines a morphism
$\Phi_{\scriptscriptstyle E_{3}} : S \longrightarrow G(3, 7)$ into
the Grassmannian $G(3, 7) \subset
\mathbb{P}(\bigwedge^{3}\mathbb{C}^{7})$ (cf.
Remark~\ref{remark:when-e-gives-a-morphism}). Moreover,
$\Phi_{\scriptscriptstyle E_{3}}$ coincides with embedding $S
\hookrightarrow \mathbb{P}^{12}$ given by the linear system $|H -
4C|$, and the surface $S = \Phi_{\scriptscriptstyle E_{3}}(S)
\subset G(3, 7) \cap \mathbb{P}^{12}$ can be described by explicit
equations on $G(3, 7)$ (see Theorem~\ref{theorem:mukai}). In fact,
one may run these arguments for any (BN) general polarized
$\mathrm{K3}$ surface $(S_{22},L_{22})$ of genus $12$, which
implies that the moduli space $\mathcal{K}_{12}$ is unirational
(see Remark~\ref{remark:s-12-is-unirational}).

Further, vector bundle $E_3 \otimes \mathcal{O}_{S}(C)$ turns out
to be rigid of rank $3$ as well, satisfying $c_1(E_3) \otimes
\mathcal{O}_{S}(C) = H - C$ and $\dim H^{0}(S,E_3 \otimes
\mathcal{O}_{S}(C)) = 14$ (see Lemma~\ref{theorem:wedge-e-3}).
Then again we get a morphism $\Phi_{\scriptscriptstyle E_3 \otimes
\mathcal{O}_{S}(C)} : S \longrightarrow G(3, 14) \subset
\mathbb{P}(\bigwedge^{3}\mathbb{C}^{14})$ which is the embedding
$S \hookrightarrow \mathbb{P}^{33}$ given by $|H - C|$. Working a
bit more with the space $H^{0}(S,E_3 \otimes \mathcal{O}_{S}(C))$
(cf. Proposition~\ref{theorem:key-lemma} and
Corollary~\ref{theorem:key-cor}) we find two projective subspaces,
$\Pi$ and $\Lambda$, in $\mathbb{P}(\bigwedge^{3}\mathbb{C}^{14})$
(see {\bf 3.8} in Section~\ref{section:eq-1} for their
construction) such that the following holds (cf.
Lemma~\ref{theorem:key-lemma-2}):

\begin{theorem}
\label{theorem:main-1} The surface $S = \Phi_{\scriptscriptstyle
E_3 \otimes \mathcal{O}_{S}(C)}(S)$ coincides with Zariski closure
of the locus $\widehat{S}\setminus{\Pi}$ in
$$
\widehat{S} := G\big(3,14\big) \cap \Lambda \cap \big(\lambda =
\sigma_{1} = \sigma_{2} = \sigma_{3} = 0\big)
$$
for some global sections $\lambda \in
H^{0}(G(3,14),\bigwedge^{3}\mathcal{E}_{14}) \simeq
\bigwedge^{3}\mathbb{C}^{14}$ and $\sigma_{i} \in
H^{0}(G(3,14),\bigwedge^{2}\mathcal{E}_{14}) \simeq
\bigwedge^{2}\mathbb{C}^{14}$ of the universal bundle
$\mathcal{E}_{14}$ on $G(3, 14)$.
\end{theorem}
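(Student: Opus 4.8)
The plan is to realize the four sections $\lambda,\sigma_1,\sigma_2,\sigma_3$ as being detected by the pullback maps along $\Phi := \Phi_{\scriptscriptstyle E_3\otimes\mathcal{O}_S(C)}$, and then to reconcile the two descriptions of $S$ — as $\Phi(S)\subset G(3,14)$ and as the image of the embedding $|H-C|:S\hookrightarrow\mathbb{P}^{33}$. Write $F := E_3\otimes\mathcal{O}_S(C)$, so that $\Phi^{*}\mathcal{E}_{14}=F$ and the induced isomorphism $H^{0}(G(3,14),\mathcal{E}_{14})\simeq\mathbb{C}^{14}\xrightarrow{\ \sim\ }H^{0}(S,F)$ identifies the Pl\"ucker line bundle $\bigwedge^{3}\mathcal{E}_{14}=\mathcal{O}_{G(3,14)}(1)$ with $\det F=\mathcal{O}_S(H-C)$ (Lemma~\ref{theorem:wedge-e-3}). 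Pulling back along $\Phi$ then gives the natural multiplication maps
\[
\mu_2:\textstyle\bigwedge^{2}H^{0}(S,F)\longrightarrow H^{0}\big(S,\bigwedge^{2}F\big),\qquad \mu_3:\textstyle\bigwedge^{3}H^{0}(S,F)\longrightarrow H^{0}(S,\det F),
\]
and a section of $\bigwedge^{j}\mathcal{E}_{14}$ vanishes along $\Phi(S)$ precisely when the corresponding element of $\bigwedge^{j}\mathbb{C}^{14}$ lies in $\ker\mu_j$.

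First I would produce the sections. Using Proposition~\ref{theorem:key-lemma} and Corollary~\ref{theorem:key-cor} to compute $h^{0}(S,\bigwedge^{2}F)$ and the rank of $\mu_2$, one finds $\dim\ker\mu_2\geqslant 3$, so three independent elements $\sigma_1,\sigma_2,\sigma_3\in\ker\mu_2\subset\bigwedge^{2}\mathbb{C}^{14}$ may be chosen; these are the sections whose common zero locus on $G(3,14)$ contains $\Phi(S)$. The element $\lambda$ is then taken from $\ker\mu_3$, i.e.\ a Pl\"ucker hyperplane through $\Phi(S)$, chosen compatibly with the subspace $\Lambda$ and the hyperplane $\Pi$ produced in {\bf 3.8} from the filtration $H^{0}(S,E_3)\subset H^{0}(S,F)=\mathbb{C}^{14}$ arising from the exact sequence $0\to E_3\to F\to F|_C\to 0$. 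By construction $\Phi(S)\subseteq G(3,14)\cap\Lambda\cap(\lambda=\sigma_1=\sigma_2=\sigma_3=0)=\hat S$, while $\Phi(S)\not\subseteq\Pi$; hence $S=\Phi(S)\subseteq\overline{\hat S\setminus\Pi}$, which is the easy inclusion.

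The reverse inclusion is the substance of the statement, and is where I expect the main difficulty. Here I would argue that, away from $\Pi$, the sections $\lambda,\sigma_1,\sigma_2,\sigma_3$ together with the linear equations of $\Lambda$ cut $G(3,14)$ in the expected codimension, so that $\hat S\setminus\Pi$ is two-dimensional with $S$ as a component; this is the content of the companion Lemma~\ref{theorem:key-lemma-2}. The delicate point is to exclude spurious components: one must show that every component of $\hat S$ meeting the open set $G(3,14)\setminus\Pi$ coincides with $\Phi(S)$. I would do this by transporting the analysis to the genus-$12$ picture of Theorem~\ref{theorem:mukai}, where $\Phi_{\scriptscriptstyle E_3}(S)\subset G(3,7)\cap\mathbb{P}^{12}$ is cut out by a net of alternating forms, and using the filtration $H^{0}(S,E_3)\subset H^{0}(S,F)$ to see that the residual locus of $\hat S$ is forced into the hyperplane $\Pi$, namely the locus governed by the quotient $H^{0}(S,F|_C)$. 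Removing $\Pi$ and taking the Zariski closure then yields exactly $\Phi(S)=S$. The main obstacle is thus precisely this control of the intersection $\hat S$ outside $\Pi$ — verifying the expected-dimension and regularity statement and the absence of other components — rather than the construction of the sections, which is cohomological and routine given the earlier results.
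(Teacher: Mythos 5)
Your overall skeleton is right (the easy inclusion via kernels of the multiplication maps, the hard direction by reducing to the genus-$12$ picture away from $\Pi$), but two points do not survive comparison with the actual argument, and the second is a genuine gap. First, the sections $\sigma_1,\sigma_2,\sigma_3$ and $\lambda$ are not obtained by a dimension count on $\ker\mu_2$ and $\ker\mu_3$: the paper takes them to be \emph{exactly} the genus-$12$ Mukai equations of $\Phi_{\scriptscriptstyle E_3}(S)\subset G(3,7)$ from Theorem~\ref{theorem:mukai}, rewritten in the coordinates $ts_i$ via the inclusion $t\cdot H^{0}(S,E_3)\subset H^{0}(S,E_3\otimes\mathcal{O}_S(C))$. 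An arbitrary triple in $\ker\mu_2$ need not be supported on the $ts_i\wedge ts_j$ coordinates, and without that support the reduction to Theorem~\ref{theorem:mukai} that you invoke later has no way to get started. Relatedly, you misread the role of Proposition~\ref{theorem:key-lemma} and Corollary~\ref{theorem:key-cor}: they are not used to compute $h^{0}(S,\bigwedge^{2}F)$ or the rank of $\mu_2$; the identities $\xi_i\wedge ts_a\wedge ts_b=t^{3}\lambda_i$ are themselves linear relations among Pl\"ucker coordinates of $\mathbb{P}(\bigwedge^{3}\mathbb{C}^{14})$ (since $t^{3}\lambda_i$ is a combination of the $ts_{j_1}\wedge ts_{j_2}\wedge ts_{j_3}$), and the $21$ hyperplanes they define are precisely the linear span $\Lambda$.

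Second, for the reverse inclusion you correctly name the danger (spurious components off $\Pi$) but defer the mechanism that removes it, and that mechanism is the whole content of Lemma~\ref{theorem:key-lemma-2}: on the open set where the single Pl\"ucker coordinate $ts_1\wedge ts_2\wedge ts_3$ is nonzero (this is what $\Pi$ is --- one coordinate hyperplane, not a locus extracted from the filtration in any subtler sense), the equations of $\Lambda$ \emph{solve for} every Pl\"ucker coordinate involving a $\xi_i$ in terms of the pure $ts$-coordinates. Hence $\hat{S}\setminus\Pi$ is graph-like over the locus $G(3,7)\cap(\lambda=\sigma_1=\sigma_2=\sigma_3=0)$, which Theorem~\ref{theorem:mukai} identifies with the irreducible surface $\Phi_{\scriptscriptstyle E_3}(S)$; irreducibility and the containment in $S$ then give the claim at once. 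Your proposed substitute --- an expected-codimension and regularity argument for the $1+9+21$ conditions cutting the $33$-dimensional $G(3,14)$ --- is exactly the kind of statement that is hard to verify directly and that the elimination argument is designed to avoid. As written, the proposal acknowledges rather than closes this step, so the proof is incomplete at its central point.
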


Now, any general polarized $\mathrm{K3}$ surface $(S_{64},L_{64})$
of genus $33$ can be embedded into $G(3, 14)$ the same way as $S$
above, and represented in $G(3, 14)$ in the form similar to that
in Theorem~\ref{theorem:main-1} (see {\bf 4.1} and
Lemma~\ref{theorem:common-open-u} for details). The latter allows
one, by using the preceding results, to construct a birational map
$\mathcal{K}_{33}\dashrightarrow\mathcal{K}_{12}$ (see
Lemma~\ref{theorem:cor-def-phi} and
Proposition~\ref{theorem:shape-of-s-64}) and to finish the proof
of Theorem~\ref{theorem:main}.

Finally, the paper concludes with Remark~\ref{remark:genus-36},
where we sketch a related approach to prove unirationality of
$\mathcal{K}_{36}$.

\bigskip

\thanks{{\bf Acknowledgments.}  I would like to thank A.\,Lopez,
Yu.\,G.\,Prokhorov, and F.\,Viviani for fruitful conversations. I
am also grateful to anonymous referee for helpful remarks. The
work was supported by World Premier International Research
Initiative (WPI), MEXT, Japan, Grant-in-Aid for Scientific
Research (26887009) from Japan Mathematical Society (Kakenhi), and
by the \emph{Project TROPGEO of the European Research Council}.}.

\bigskip

\section{Preliminaries}
\label{section:preliminaries}

\refstepcounter{equation}
\subsection{}

In the present section, we recall some notions and facts about
(primitively) polarized $\mathrm{K3}$ surfaces, which will be used
through the rest of the paper (see also \cite{griff-harr},
\cite{hartshorne-ag} and \cite{isk-prok} for other standard
notation, notions and facts employed below). We also establish
several auxiliary results. The ground field will be $\mathbb{C}$.

\begin{definition}[see {\cite[Definition 3.8]{mukai}}]
\label{definition:bn-general} A polarized $\mathrm{K3}$ surface
$(\frak{S}, L)$ of genus $g$ is called \emph{BN general} if
$h^{0}(\frak{S}, L_{1})h^{0}(\frak{S}, L_{2}) < g + 1$ for all
non\,-\,trivial line bundles $L_{1}, L_{2} \in
\mathrm{Pic}(\frak{S})$ such that $L = L_1 + L_2$.

\end{definition}

\medskip

\begin{example}
\label{example:ex-of-bn-gen} Generic point in the moduli space
$\mathcal{K}_g$ corresponds to a BN general $\mathrm{K3}$. All BN
general $\mathrm{K3}$ surfaces of genus $g$ form a Zariski open
subset in $\mathcal{K}_g$.

\end{example}

\medskip

\begin{definition}
\label{definition:generated-by-global-sections} Let $W$ be a
smooth projective variety and $E$ a vector bundle on $W$. Then $E$
is said to be \emph{generated by global sections} if the natural
homomorphism of $\mathcal{O}_W$\,-\,modules
$ev_{\scriptscriptstyle E} : H^{0}(W,E) \otimes \mathcal{O}_W
\longrightarrow E$ is surjective (one identifies $E$ with its
sheaf of sections as usual).

\end{definition}

\medskip

\begin{remark}[cf. {\cite[Section 2]{mukai-1}}]
\label{remark:when-e-gives-a-morphism} In the setting of
Definition~\ref{definition:generated-by-global-sections}, if $E$
is generated by global sections, then one gets a natural morphism
$\Phi_{\scriptscriptstyle E} : W \longrightarrow G(r,N)$. Here $r
:= \mathrm{rank}(E)$, $N := h^{0}(W,E)$, and $G(r,N)$ (or
$(G(r,\mathbb{C}^N$)) is the Grassmannian of $r$\,-\,dimensional
linear subspaces in $\mathbb{C}^N$. Morphism
$\Phi_{\scriptscriptstyle E}$ sends each $x \in W$ to the subspace
$E_{x}^{\vee}\subset H^{0}(W,E)^{\vee}$ ($E_{x}^{\vee}$ is the
dual to the fiber $E_x \subset E$). In particular, we have the
equality $E = \Phi_{\scriptscriptstyle E}^{*}(\mathcal{E})$ for
the universal vector bundle $\mathcal{E}$ on $G(r,N)$, so that
$H^{0}(W,E) = H^{0}(G(r,N),\mathcal{E})$. Furthermore, if the
natural homomorphism $\bigwedge^{r}H^{0}(W,E) \longrightarrow
H^{0}(W,\bigwedge^{r}E)$ induced by the $r$\,-\,th exterior power
of $ev_{\scriptscriptstyle E}$ is surjective, then
$\Phi_{\scriptscriptstyle E}$ coincides with embedding
$\Phi_{\scriptscriptstyle|c_1(E)|} : W \hookrightarrow \mathbb{P}
:= \mathbb{P}(H^{0}(W,c_1(E))^{\vee})$ given by the linear system
$\left|c_1(E)\right|$. More precisely, the diagram
$$
\begin{array}{rcl}
\Phi_{\scriptscriptstyle E} : W&\longrightarrow G\big(r,N\big)\cap\mathbb{P}&\subset G\big(r,N\big)\\
\cap&&\qquad\cap\\
\mathbb{P}&\hookrightarrow&
\mathbb{P}\big(\bigwedge^{r}H^{0}\big(W,E\big)^{\vee}\big)
\end{array}
$$
commutes, where $G(r,N) \subset
\mathbb{P}(\bigwedge^{r}H^{0}(W,E)^{\vee})$ is embedded via
Pl\"ucker.

\end{remark}

\medskip

\begin{theorem-definition}[see {\cite{mukai}, \cite{mukai-1}, \cite{mukai-7}, \cite{mukai-8}}]
\label{theorem:existence-if-rigid-v-b} Let $(\frak{S},L)$ be a
polarized $\mathrm{K3}$ surface of genus $g$. Assume that
$(\frak{S},L)$ is BN general. Then for every pair of integers
$(r,s)$, with $g = rs$, there exists a (\emph{Gieseker}) stable
vector bundle $E_r$ on $\frak{S}$ of rank $r$, such that the
following holds:

\begin{enumerate}

\item\label{1-it} $c_1(E_r) = L$;

\smallskip

\item\label{2-it} $H^{i}(\frak{S},E_{r}) = 0$ for all $i > 0$ and $h^{0}(\frak{S},E_{r}) = r + s$;

\smallskip

\item\label{3-it} $E_r$ is generated by global sections and the natural homomorphism $\bigwedge^{r}H^{0}(\frak{S},E_{r})
\longrightarrow H^{0}(\frak{S},\bigwedge^{r}E_{r}) =
H^{0}(\frak{S},L)$ is surjective (cf.
Remark~\ref{remark:when-e-gives-a-morphism});

\smallskip

\item\label{4-it} every stable vector bundle on $\frak{S}$ which
satisfies $(\ref{1-it})$ and $(\ref{2-it})$ is isomorphic to
$E_r$.

\end{enumerate}

\smallskip

$E_r$ is called \emph{rigid} vector bundle.

\end{theorem-definition}

\medskip

\begin{remark}
\label{remark:replace-h-0-with-chi} One may replace $(\ref{2-it})$
in Theorem\,-\,definition~\ref{theorem:existence-if-rigid-v-b} by
the condition $\chi(\frak{S}, E_{r}) = r + s$.

\end{remark}

\medskip

\refstepcounter{equation}
\subsection{}
\label{subsection:pre-a-0}

Let $X$ be the Fano threefold with canonical Gorenstein
singularities and anticanonical degree $(-K_{X})^3 = 70$ (see
\cite{isk-prok}, \cite{karz-2}, \cite{karz-1}). Recall that
divisor $-K_{X}$ is very ample and the linear system $|-K_{X}|$
gives an embedding of $X$ into $\mathbb{P}^{37}$. We also have
$$
\mathrm{Pic}(X) = \mathbb{Z}\cdot K_X \qquad \mbox{and} \qquad
\mathrm{Cl}(X) = \mathbb{Z}\cdot K_X \oplus \mathbb{Z}\cdot
\widehat{E}
$$
for the quadratic cone $\widehat{E}\subset X \subset
\mathbb{P}^{37}$ (see \cite[Corollary 3.11]{karz-3}).

The next result was proved in \cite{karz-3}:

\begin{proposition}[see {\cite[Corollary 1.5]{karz-3}}]
\label{theorem:r-polarized} Generic element $S \in |-K_{X}|$ is a
$\mathrm{K3}$ surface such that the lattice $\mathrm{Pic}(S)$ is
spanned by some very ample divisor $H \sim -K_{X}\big\vert_S$ and
a $(-2)$\,-\,curve $C := \widehat{E}\big\vert_S$. Furthermore, we
have $(H^2) = 70$, $H \cdot C = 2$ and the pairs $(S,H)$ form a
unirational hypersurface
$\mathcal{K}_{36}^{\,\mathrm{R}}\subset\mathcal{K}_{36}$.

\end{proposition}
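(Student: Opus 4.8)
The plan is to prove the five constituent assertions in turn: that a generic $S\in|-K_X|$ is a smooth $\mathrm{K3}$ surface; the three intersection numbers $(H^2)=70$, $H\cdot C=2$, $C^2=-2$; that $C$ is an irreducible $(-2)$-curve; and finally the lattice identification $\mathrm{Pic}(S)=\langle H,C\rangle$. For the first, since $-K_X$ is very ample the system $|-K_X|$ is base-point free, so Bertini makes the generic $S$ smooth away from $\mathrm{Sing}(X)$; combined with the explicit local structure of the finitely many canonical Gorenstein points of $X$ (from \cite{karz-1}, \cite{karz-2}, \cite{karz-3}) one checks the generic member is smooth everywhere. Adjunction gives $K_S=(K_X+S)\big\vert_S=\mathcal{O}_S$, and the cohomology sequence of $0\to\mathcal{O}_X(K_X)\to\mathcal{O}_X\to\mathcal{O}_S\to0$ together with Kawamata--Viehweg vanishing yields connectedness and $H^1(S,\mathcal{O}_S)=0$. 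Hence $S$ is a $\mathrm{K3}$ surface.

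Next, $(H^2)=(-K_X)^2\cdot S=(-K_X)^3=70$. Because $-K_X$ is Cartier, the numbers $H\cdot C=(-K_X)^2\cdot\hat{E}$ and $C^2=(-K_X)\cdot\hat{E}^2$ may be computed on a resolution $\pi:\tilde{X}\to X$, by writing the pullback of $-K_X$ and the strict transform of the quadratic cone $\hat{E}$; from the explicit model of $X$ and of $\hat{E}$ these evaluate to $2$ and $-2$ respectively. The restriction $C=\hat{E}\big\vert_S$ is irreducible (the cone $\hat{E}$ meets the generic $S$ in a single curve), and adjunction on the $\mathrm{K3}$ surface, $2p_a(C)-2=C^2=-2$, gives $p_a(C)=0$, so $C\cong\mathbb{P}^1$ is a $(-2)$-curve. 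Observe also that $H$ and $C$ are each primitive in $\mathrm{Pic}(S)$: if $H=mD$ then $m^2\mid70$ forces $m=1$ since $70$ is squarefree, and if $C=mD$ then $D^2=-2/m^2\in\mathbb{Z}$ forces $m=1$.

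The restriction map $\mathrm{Cl}(X)\to\mathrm{Pic}(S)$ sends $-K_X\mapsto H$ and $\hat{E}\mapsto C$, so $\langle H,C\rangle\subseteq\mathrm{Pic}(S)$ is a rank-$2$ sublattice with Gram matrix $\left(\begin{smallmatrix}70&2\\2&-2\end{smallmatrix}\right)$ of discriminant $-144$ and signature $(1,1)$. The crux is equality for generic $S$, which I would establish by a Noether--Lefschetz argument: the locus in $|-K_X|$ where $\mathrm{Pic}(S)\supsetneq\langle H,C\rangle$ is a countable union of proper closed subvarieties, provided the period map of the family $\{S\in|-K_X|\}$ dominates the $18$-dimensional period domain attached to the transcendental lattice $\langle H,C\rangle^{\perp}$. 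Dominance follows from a dimension count: infinitesimal Torelli makes the period map an immersion, so its image has dimension $\dim|-K_X|-\dim\mathrm{Aut}(X)=37-\dim\mathrm{Aut}(X)$, which the explicit model shows to equal $18$ (here $h^0(X,-K_X)=38$), forcing the image to fill the domain. Equivalently, one may exhibit a single member of Picard number exactly $2$ and use that the very general member attains the minimal Picard number of the family, together with $\rho(S)\geq2$ from $H,C$. Either way $\rho(S)=2$ generically, so $\langle H,C\rangle$ has finite index in $\mathrm{Pic}(S)$; saturation is then a finite check, since any overlattice $M\supsetneq\langle H,C\rangle$ has index $k$ with $k^2\mid144$, and integrality of $w\cdot H$, $w\cdot C$ and $w^2\in2\mathbb{Z}$ for a putative new generator $w=(aH+bC)/k$ rules out $k>1$. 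This gives $\mathrm{Pic}(S)=\langle H,C\rangle$.

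I expect the main obstacle to be this last step, namely pinning the generic Picard number at exactly $2$ rather than merely $\geq2$, since it requires genuine control of the variation of Hodge structure of the family, i.e. the dominance of the period map; in practice this rests on the precise computation of $\dim\mathrm{Aut}(X)$ and of the two intersection numbers $(-K_X)^2\cdot\hat{E}$ and $(-K_X)\cdot\hat{E}^2$ from the explicit geometry of $X$.
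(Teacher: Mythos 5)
First, note that the paper does not prove this proposition at all: it is quoted directly from \cite[Corollary 1.5]{karz-3} (the author's earlier paper on polarized $\mathrm{K3}$ surfaces of genus $36$), so there is no internal argument to compare yours against. Your outline is the natural one, and the routine parts are fine (adjunction and the cohomology sequence giving the $\mathrm{K3}$ condition, $(H^2)=(-K_X)^3=70$, $H\cdot C=\deg\hat{E}=2$, $p_a(C)=0$ from $C^2=-2$, primitivity of $H$ and of $C$). But the genuinely hard inputs --- smoothness of the generic member along $\mathrm{Sing}(X)$, the value of $(-K_X)\cdot\hat{E}^2$, and the count $\dim\mathrm{Aut}(X)=19$ needed for dominance of the period map onto the $18$-dimensional period domain --- are all deferred to ``the explicit model'' without being carried out, and they are exactly where the content of \cite{karz-3} lies.

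More seriously, your final saturation step fails as stated. The lattice $\langle H,C\rangle$ with Gram matrix $\left(\begin{smallmatrix}70&2\\2&-2\end{smallmatrix}\right)$ \emph{does} admit a proper even integral overlattice: put $u:=\tfrac{1}{2}(H+C)$; then $u\cdot H=36\in\mathbb{Z}$, $u\cdot C=0\in\mathbb{Z}$ and $u^2=18\in 2\mathbb{Z}$, so $\mathbb{Z}u\oplus\mathbb{Z}C\cong\langle 18\rangle\oplus\langle -2\rangle$ is an even overlattice of index $2$ (discriminant $-36=-144/4$), and it is not excluded by integrality of intersection numbers, by evenness, or by the primitivity of $H$ and $C$ (both remain primitive in the overlattice, e.g.\ $H=2u-C$). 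Hence even granting $\rho(S)=2$ for the very general $S$, your argument cannot distinguish $\mathrm{Pic}(S)=\langle H,C\rangle$ from $\mathrm{Pic}(S)=\langle u,C\rangle$: ruling out the half-integral class $\tfrac{1}{2}(H+C)$ requires further geometric input (for instance a Riemann--Roch and effectivity analysis of $\pm u$, or a degeneration/monodromy argument), not the purely arithmetic ``finite check'' you describe. Since this is the step you yourself identify as the crux, the gap is genuine.
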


\medskip

Let $S$ be the $\mathrm{K3}$ surface as in
Proposition~\ref{theorem:r-polarized}.

\begin{lemma}
\label{theorem:polarizations} $H - 4C$ is an ample divisor on $S$.
\end{lemma}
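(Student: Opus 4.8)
The plan is to use the Nakai–Moishezon criterion on the K3 surface $S$, whose Picard lattice is explicitly known from Proposition~\ref{theorem:r-polarized}. Since $\mathrm{Pic}(S) = \mathbb{Z}\cdot H \oplus \mathbb{Z}\cdot C$ with the intersection numbers $(H^2) = 70$, $H\cdot C = 2$, and $(C^2) = -2$ (the last because $C$ is a $(-2)$-curve), I would first record the self-intersection of the candidate ample class: a direct computation gives $((H-4C)^2) = (H^2) - 8\,(H\cdot C) + 16\,(C^2) = 70 - 16 - 32 = 22 > 0$. Thus $H-4C$ lies in the positive cone, and it remains only to check that $(H-4C)\cdot D > 0$ for every irreducible curve $D$ on $S$.

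The heart of the argument is therefore to control the effective curves on $S$. The key reduction is that on a K3 surface, the relevant curves to test against an ample candidate are the $(-2)$-curves (together with the nef/positive classes); more precisely, once $((H-4C)^2)>0$ and $(H-4C)\cdot H' > 0$ for the known ample class $H$, the only way positivity can fail is on an irreducible curve $D$ with $(D^2) = -2$. So the plan is to parametrize an arbitrary effective class as $D \sim aH + bC$, compute $(H-4C)\cdot D = 70a + 2b - 8a - 4b \cdot \tfrac{}{}$ — carefully, $(H-4C)\cdot(aH+bC) = a(H^2) + b(H\cdot C) - 4a(H\cdot C) - 4b(C^2) = 70a + 2b - 8a + 8b = 62a + 10b$ — and then show this is positive for every class that can be represented by an irreducible curve.

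The main obstacle is thus to bound the coefficient $b$ from below relative to $a$ for irreducible curves $D \sim aH+bC$, i.e. to rule out classes with very negative $b$. Here I would use that $H$ is ample so $H\cdot D = 70a + 2b > 0$ for any irreducible $D \neq 0$, giving $b > -35a$; combined with the fact that an irreducible curve other than $C$ itself must meet $C$ non-negatively, $C\cdot D = 2a + 2b \geqslant 0$ (so $b \geqslant -a$) unless $D = C$, one sees $62a + 10b > 0$ in the generic case, while the single exceptional curve $D = C$ gives $(H-4C)\cdot C = 2 + 8 = 10 > 0$ directly. The delicate point is handling classes with $a=0$ or with $(D^2)=-2$ and small $a$: for these I would invoke that any irreducible $(-2)$-curve $D$ satisfies $H\cdot D>0$ together with Riemann–Roch on the K3 surface to enumerate the finitely many numerical possibilities, checking positivity of $62a+10b$ in each. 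This case analysis over the few small-degree $(-2)$-classes is where the real work lies; once it is complete, the Nakai–Moishezon criterion yields that $H-4C$ is ample.
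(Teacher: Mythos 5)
Your overall plan (check $((H-4C)^2)>0$, then test against irreducible curves, reducing to $(-2)$-classes) is viable, but as written the argument contains a sign error that invalidates the main case, and the decisive step is explicitly deferred rather than carried out. The error: since $(C^2)=-2$ and $H\cdot C=2$, one has $C\cdot(aH+bC)=2a-2b$, not $2a+2b$; so for an irreducible $D\neq C$ the condition $C\cdot D\geqslant 0$ gives $b\leqslant a$, an \emph{upper} bound on $b$, which is useless for bounding $62a+10b$ from below. Your ``generic case'' conclusion therefore rests on the wrong sign, and the only genuine lower bound you have, $b>-35a$ from $H\cdot D>0$, is far too weak (it permits, say, $a=1$, $b=-7$, where $62a+10b=-8$). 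The second issue is that you defer ``the real work'' --- the enumeration of $(-2)$-classes --- but that enumeration is the entire content of the lemma, so the proposal is not yet a proof. It can in fact be completed cleanly: $70a^2+4ab-2b^2=-2$ is equivalent to $(a-b)^2-(6a)^2=1$, and since $36$ is a perfect square this forces $a=0$, $b=\pm1$; hence $C$ is the only $(-2)$-curve on $S$, and $(H-4C)\cdot C=10>0$.

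For comparison, the paper's proof avoids the reduction to $(-2)$-curves (and hence any appeal to the chamber structure of the ample cone). Given an irreducible $Z=aH+bC$ with $(H-4C)\cdot Z\leqslant 0$, it first notes $Z\neq C$, then obtains $a>0$ by pairing with the nef and big class $H+C$, which satisfies $(H+C)\cdot C=0$ and $(H+C)\cdot Z=72a$; the inequality $62a+10b\leqslant 0$ then forces $b<-6a$, whence $(Z^2)=70a^2+4ab-2b^2\leqslant-26a^2<-2$, contradicting the fact that an irreducible curve on a $\mathrm{K3}$ surface has self-intersection at least $-2$. If you want to salvage your route, fix the sign of $C\cdot D$ and actually perform the Pell-equation enumeration above; otherwise the paper's direct contradiction is the shorter path.
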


\begin{proof}
Let $Z\subset S$ be an irreducible curve such that $(H - 4C) \cdot
Z \leqslant 0$. Write
$$
Z = aH + bC
$$
in $\mathrm{Pic}(S)$ for some $a,b \in \mathbb{Z}$. Note that $a
> 0$, since $Z \ne C$, the linear system $|m(H + C)|$ is basepoint\,-\,free for
$m \gg 1$ (it provides a contraction of $C$) and $(H + C) \cdot Z
= 72a$. On the other hand, we have
$$
0 \geqslant (H - 4C) \cdot Z = 62a + 10b,
$$
which implies that $b < -6a$. But in this case we get
$$
(Z^{2}) = 70a^2 + 4ab - 2b^2 \leqslant -26a^2 < -2
$$
--- a contradiction.

Hence $(H - 4C) \cdot Z
> 0$ for every curve $Z \subset S$. Then $H - 4C$ is ample by the Nakai\,--\,Moishezon criterion and
because of $(H - 4C)^2 = 22$.

\end{proof}

\medskip

\begin{remark}
\label{remark:direct-proof} Using the same arguments as in the
proof of Lemma~\ref{theorem:polarizations}, one can show that $H -
kC$, $1\leq k \leq 3$, is an ample divisor on $S$ as well, which
provides a polarization of genus $36 - 2k - k^2$.\footnote{~It
follows from Proposition~\ref{theorem:r-polarized} that these $(S,
H - kC)$ form hypersurfaces $\mathcal{K}_{36 - 2k -
k^2}^{\text{R}}\subset\mathcal{K}_{36 - 2k - k^2}$ (birationally)
isomorphic to $\mathcal{K}_{36}^{\mathrm{R}}$ for all $k$.} It is
also possible to see this via geometric arguments. Namely, let
$p_{1} : \mathbb{P}^{37} \dashrightarrow \mathbb{P}^{34}$ be the
linear projection from the plane $\Pi$, passing through the conic
$C$. The blowup $f_1 : Y_1 \longrightarrow X$ of $C$ resolves
indeterminacies of $p_1$ on $X$ and gives a morphism $g_1 : Y_1
\longrightarrow X_1 := p_{1}(X)$. It can be easily checked that
$Y_1$ is a weak Fano threefold and $X_1 \subset \mathbb{P}^{34}$
is an anticanonically embedded Fano threefold of genus $33$ (cf.
the proof of Proposition 6.12 in \cite{karz-2}). Moreover, we
obtain $\mathrm{Pic}(Y_{1}) = \mathbb{Z} \cdot K_{Y_{1}} \oplus
\mathbb{Z} \cdot E_{f_{1}}$, where $E_{f_{1}} \simeq \mathbb{F}_4$
is the $f_1$\,-\,exceptional divisor, and the morphism $g_1$
contracts the surface $f_{1*}^{-1}(\widehat{E})$ to a point. In
particular, the singular locus $\mathrm{Sing}(X_{1})$ consists of
a unique point, $\mathrm{Pic}(X_{1}) = \mathbb{Z} \cdot K_{X_{1}}$
and $\mathrm{Cl}(X_{1}) = \mathbb{Z} \cdot K_{X_{1}} \oplus
\mathbb{Z} \cdot E^{\,(1)}$, where $E^{\,(1)} :=
g_{1*}(E_{f_{1}})$. One can prove that $E^{\,(1)}$ is the cone
over a rational normal curve of degree $4$ such that $E^{\,(1)} =
X_1 \cap \mathbb{P}^5$. In particular, there exists a rational
normal curve $C_1 \subset X_1 \setminus{\mathrm{Sing}(X_{1})}$ of
degree $4$, with $C_1 = X_1 \cap \Pi_1$ for some linear space
$\Pi_1 \simeq \mathbb{P}^4$. Proceeding with $X_1$, $\Pi_1$, etc.
the same way as with $X$, $\Pi$, etc. above, we get three more
anticanonically embedded Fano threefolds $X_2 \subset
\mathbb{P}^{29}$, $X_3 \subset \mathbb{P}^{22}$, $X_4 \subset
\mathbb{P}^{13}$ of genera $28$, $21$, $12$, respectively, such
that $\mathrm{Sing}(X_{k})$ consists of a unique point,
$\mathrm{Pic}(X_{k}) = \mathbb{Z} \cdot K_{X_{k}}$ and
$\mathrm{Cl}(X_{k}) = \mathbb{Z} \cdot K_{X_{k}} \oplus \mathbb{Z}
\cdot E^{\,(k)}$ for all $k$, where $E^{\,(k)}$ is the cone over a
rational normal curve of degree $2 + 2k$. By construction, $S$ is
isomorphic to a surface $S_k \in |-K_{X_{k}}|$, $1 \leqslant k
\leqslant 4$. Furthermore, identifying $S$ with $S_k$, we find
that $-K_{X_{k}}\big\vert_{S_{k}} \sim H - kC$ is an ample divisor
on $S$, which provides a polarization on $S$ of genus $36 - 2k -
k^2$.

\end{remark}

\medskip

\begin{lemma}
\label{theorem:bn-general} The polarized $\mathrm{K3}$ surface
$(S,H - 4C)$ (of genus $12$) is BN general.
\end{lemma}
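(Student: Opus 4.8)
The plan is to unwind Definition~\ref{definition:bn-general} and reduce it to a finite, completely explicit check. With $L = H - 4C$ and $g = 12$, I must show that $h^{0}(S, L_{1})\,h^{0}(S, L_{2}) < 13$ for every non-trivial splitting $L = L_{1} + L_{2}$ in $\mathrm{Pic}(S) = \mathbb{Z}H \oplus \mathbb{Z}C$. The first reduction is that only those decompositions in which \emph{both} $L_{1}$ and $L_{2}$ are effective can possibly violate the inequality, since otherwise one factor is $0$ and the product is $0 < 13$. Writing $L_{1} = aH + bC$ (so that $L_{2} = (1-a)H - (4+b)C$), the problem becomes: determine which integral classes $aH + bC$ lie in the effective cone, and inspect the finitely many resulting cases.

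The key geometric input is a precise description of the effective cone $\overline{\mathrm{Eff}}(S)$. Using $(H^{2}) = 70$, $H\cdot C = 2$, $C^{2} = -2$, I first show that $C$ is the \emph{only} $(-2)$-curve on $S$: solving $(xH+yC)^{2} = -2$ gives $y = x \pm \sqrt{36x^{2}+1}$, so integrality forces $k^{2} - (6x)^{2} = 1$ for $k := \sqrt{36x^{2}+1}$, whence $x = 0$ and the class $\pm C$. Next, solving $(xH+yC)^{2} = 0$ gives the two isotropic rays $H + 7C$ and $H - 5C$; the class $H - 5C$ is nef, since it meets $C$ and itself non-negatively and by the Hodge index theorem is non-negative on every other irreducible curve, and likewise $H + C$ is nef because $(H+C)\cdot C = 0$. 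As $C$ and $H - 5C$ are effective while $H + C$ and $H - 5C$ are nef, and effective classes pair non-negatively with nef ones, I obtain
$$
\overline{\mathrm{Eff}}(S) = \big\{\, aH + bC : (aH+bC)\cdot(H+C) \geq 0,\ (aH+bC)\cdot(H - 5C) \geq 0 \,\big\} = \big\{\, aH + bC : a \geq 0,\ b \geq -5a \,\big\}.
$$

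Imposing $L_{1} \in \overline{\mathrm{Eff}}(S)$ and $L_{2} \in \overline{\mathrm{Eff}}(S)$ then forces $a \in \{0,1\}$ together with a bounded range for $b$, and after discarding the trivial splittings $L_{1} = 0$ and $L_{2} = 0$ the only surviving case (up to interchanging the summands) is
$$
L = C + (H - 5C).
$$
It remains to compute the two dimensions: $h^{0}(S, C) = 1$ because $C$ is a rigid $(-2)$-curve, and $h^{0}(S, H - 5C) = 2$ because $H - 5C$ is a primitive nef class with $(H - 5C)^{2} = 0$, i.e.\ an elliptic pencil (Riemann--Roch gives $\chi = 2$, while $h^{2} = h^{0}(5C - H) = 0$ and $h^{1} = 0$ by primitivity). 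Hence $h^{0}(S,C)\,h^{0}(S, H - 5C) = 2 < 13$, which is exactly the required inequality. The main obstacle is the middle step, namely establishing the exact shape of $\overline{\mathrm{Eff}}(S)$: everything hinges on knowing that $C$ is the unique $(-2)$-curve and that $H - 5C$ is the second extremal ray. The Pell-equation computation is what makes this rigorous, and it pleasantly leaves only a single genuine decomposition to examine, after which the enumeration and the cohomology count are routine.
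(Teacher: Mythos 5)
Your proof is correct, and it reaches the same basic reduction as the paper --- write $L_i = a_iH + b_iC$, use effectivity to force $\{a_1,a_2\}=\{0,1\}$ --- but it finishes differently. The paper does not determine the effective cone: it only observes that $a_i\geqslant 0$ (via nefness of $H+C$, as in the proof of Lemma~\ref{theorem:polarizations}), so that $L_2=b_2C$ with $h^{0}=0$ or $1$, and then bounds $h^{0}(S,H+b_1C)<h^{0}(S,H-4C)=13$ for every $b_1\leqslant -5$ by monotonicity of $h^0$ under subtracting the effective curve $C$; it never needs to know which of these classes are actually effective or what their exact $h^0$ is. You instead pin down $\overline{\mathrm{Eff}}(S)$ exactly --- the Pell-type computation showing $\pm C$ are the only $(-2)$-classes, the isotropic rays $H+7C$ and $H-5C$, and the duality with the nef cone spanned by $H+C$ and $H-5C$ --- which collapses the whole check to the single decomposition $H-4C=C+(H-5C)$ with product $1\cdot 2=2$. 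Your route costs more lattice work up front but buys a sharper and arguably cleaner conclusion (a unique genuine splitting and an exact value $h^{0}(S,H-5C)=2$ from the elliptic-pencil structure of the primitive nef isotropic class), and it sidesteps the strict-inequality point $h^{0}(S,H+b_1C)<13$ that the paper asserts with only an implicit justification. Both arguments are sound; yours also yields the description of the nef and effective cones of $S$, which is reusable elsewhere (e.g.\ it immediately re-proves Lemma~\ref{theorem:polarizations} and Remark~\ref{remark:direct-proof}).
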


\begin{proof}
Suppose that
$$
H - 4C = L_1 + L_2
$$
for some non\,-\,trivial $L_{1},L_{2} \in \mathrm{Pic}(S)$ (cf.
Definition~\ref{definition:bn-general}). One may assume that both
$h^{0}(S,L_{1}),h^{0}(S,L_{2})
> 0$. Write
$$
L_{i} = a_i H + b_i C
$$
in
 $\mathrm{Pic}(S)$ for some $a_{i},b_{i} \in \mathbb{Z}$. Note
that $a_i \geqslant 0$ (cf. the proof of
Lemma~\ref{theorem:polarizations}), hence we get $a_1 = 1$, $a_2 =
0$, say. The latter implies that $b_2 \ne 0$.

Now, if $b_2 < 0$, then $h^{0}(S,L_{2}) = 0$ and we are done.
Finally, if $b_2
> 0$, then $b_{1} \leqslant -5$ and hence
$$
h^{0}(S,L_{1})h^{0}(S,L_{2}) = h^{0}(S,H + b_{1}C) < h^{0}(S,H -
4C) = 13,
$$
since $h^{0}(S,L_{2}) = h^{0}(S,b_{2}C) = 1$.

\end{proof}

\medskip

\begin{remark}
\label{remark:direct-proof-1} Using the same arguments as in the
proof of Lemma~\ref{theorem:bn-general}, one can show that the
polarized $\mathrm{K3}$ surfaces $(S,H-kC)$, $0 \leqslant k
\leqslant 3$ (cf. Remark~\ref{remark:direct-proof}), are also BN
general.

\end{remark}

\medskip

Lemmas~\ref{theorem:polarizations}, \ref{theorem:bn-general} and
Theorem\,-\,definition~\ref{theorem:existence-if-rigid-v-b} imply
that there exists a rigid rank $3$ vector bundle $E_3$ on $S$,
such that $c_1(E_3) = H - 4C$ and $h^{0}(S,E_{3}) = 7$. Then from
Remark~\ref{remark:when-e-gives-a-morphism} we get the morphism
$\Phi_{\scriptscriptstyle E_{3}} : S \longrightarrow G(3,7)\cap
\mathbb{P}^{12} \subset \mathbb{P}(\bigwedge^{3}\mathbb{C}^{7})$
which coincides with embedding $\Phi_{\scriptscriptstyle|H - 4C|}
: S \hookrightarrow \mathbb{P}^{12}$. We also have $E_3 =
\Phi_{\scriptscriptstyle E_{3}}^{*}(\mathcal{E}_{7})$ for the
universal vector bundle $\mathcal{E}_{7}$ on $G(3,7)$.

Let us recall the explicit description of the image
$\Phi_{\scriptscriptstyle E_{3}}(S)$:

\begin{theorem}[see {\cite[Theorem 5.5]{mukai}}]
\label{theorem:mukai} The surface $S = \Phi_{E_{3}}(S)$ coincides
with the locus
$$
G\big(3, 7\big) \cap \big(\lambda = 0\big) \cap \big(\sigma_1 =
\sigma_2 = \sigma_3 = 0\big)
$$
for some global sections
$$
\lambda \in \bigwedge^{3}H^{0}\big(S,E_{3}\big) = H^{0}\big(G(3,
7\big), \bigwedge^{3}\mathcal{E}_{7}\big) \simeq
\bigwedge^{3}\mathbb{C}^{7}
$$
and
$$
\sigma_{1}, \sigma_{2}, \sigma_{3} \in
\bigwedge^{2}H^{0}\big(S,E_{3}\big) = H^{0}\big(G\big(3, 7\big),
\bigwedge^{2}\mathcal{E}_{7}\big) \simeq
\bigwedge^{2}\mathbb{C}^{7}.
$$

\end{theorem}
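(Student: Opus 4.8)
The plan is to realize the forms $\lambda$ and $\sigma_i$ as kernels of the natural evaluation maps on exterior powers of $E_3$, deduce the inclusion of $S = \Phi_{\scriptscriptstyle E_3}(S)$ into the indicated locus, and then match dimensions and Pl\"ucker-degrees to upgrade the inclusion to an equality. Throughout I identify $\bigwedge^3\mathbb{C}^7 = H^0(G(3,7),\bigwedge^3\mathcal{E}_7)$ and $\bigwedge^2\mathbb{C}^7 = H^0(G(3,7),\bigwedge^2\mathcal{E}_7)$ as in Theorem~\ref{theorem:mukai}, and write $L := H - 4C = \det E_3$.

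First I would produce $\lambda$. By Theorem-definition~\ref{theorem:existence-if-rigid-v-b}(\ref{3-it}) and Remark~\ref{remark:when-e-gives-a-morphism} the natural map $\bigwedge^3 H^0(S,E_3) \longrightarrow H^0(S,\bigwedge^3 E_3) = H^0(S,L)$ is surjective; since $\dim\bigwedge^3\mathbb{C}^7 = 35$ and $h^0(S,L) = g + 1 = 13$, its kernel $K$ has dimension $22$. Under the Pl\"ucker embedding every element of $K$ is a linear form vanishing on $\Phi_{\scriptscriptstyle E_3}(S)$, so I take $\lambda$ to be a generic element of $K$, giving $\Phi_{\scriptscriptstyle E_3}(S) \subseteq G(3,7)\cap(\lambda = 0)$. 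Next I would analyse the evaluation map $\bigwedge^2 H^0(S,E_3) \longrightarrow H^0(S,\bigwedge^2 E_3)$. From $\chi(S,E_3) = 7$ and $(H-4C)^2 = 22$, Riemann--Roch gives $c_2(E_3) = 10$; using the identification $\bigwedge^2 E_3 \simeq E_3^{\vee}\otimes L$, a second Riemann--Roch computation yields $\chi(S,\bigwedge^2 E_3) = 18$. Serre duality identifies $H^2(S,\bigwedge^2 E_3)$ with the dual of $H^0(S,E_3\otimes L^{-1}) = \mathrm{Hom}(L,E_3)$, which vanishes by stability of $E_3$ (as $\deg L = 22 > 22/3 = \mu(E_3)$); granting the vanishing of $H^1$ one obtains $h^0(S,\bigwedge^2 E_3) = 18$. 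Since $\dim\bigwedge^2\mathbb{C}^7 = 21$, once the evaluation map is checked to be surjective its kernel is $3$-dimensional, and I let $\sigma_1,\sigma_2,\sigma_3$ be a basis. Each $\sigma_i$, read as a section of $\bigwedge^2\mathcal{E}_7$, vanishes on $\Phi_{\scriptscriptstyle E_3}(S)$ by construction, so $\Phi_{\scriptscriptstyle E_3}(S)\subseteq \hat Z := G(3,7)\cap(\lambda = 0)\cap(\sigma_1 = \sigma_2 = \sigma_3 = 0)$.

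For the reverse inclusion I would argue by dimension and degree. The form $\lambda$ cuts the Pl\"ucker hyperplane class $c_1(\mathcal{O}(1)) = c_1(\bigwedge^3\mathcal{E}_7)$, while each $\sigma_i$ cuts the top Chern class $c_3(\bigwedge^2\mathcal{E}_7)$ of a rank-$3$ bundle; hence every component of $\hat Z$ has dimension at least $\dim G(3,7) - 1 - 3\cdot 3 = 2$, and $\Phi_{\scriptscriptstyle E_3}(S)$ is a $2$-dimensional component. A Schubert-calculus evaluation of $\int_{G(3,7)} c_1(\mathcal{O}(1))^3\, c_3(\bigwedge^2\mathcal{E}_7)^3$ computes the Pl\"ucker-degree of the cycle $\hat Z$, which I expect to equal $\deg\Phi_{\scriptscriptstyle E_3}(S) = (H-4C)^2 = 22$ (note $\Phi_{\scriptscriptstyle E_3}^{*}\mathcal{O}(1) = \det E_3 = \mathcal{O}_S(H-4C)$). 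If the two degrees agree and $\hat Z$ is reduced of pure dimension $2$, then $S$ exhausts the class of $\hat Z$ and $\hat Z = \Phi_{\scriptscriptstyle E_3}(S)$.

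The main obstacle is exactly this last point: controlling the scheme $\hat Z$, i.e.\ ruling out excess-dimensional or spurious components and verifying reducedness, so that the ten conditions cut out $S$ and nothing more. I would handle it by a genericity and semicontinuity argument: it suffices to exhibit a single choice of $(\lambda,\sigma_1,\sigma_2,\sigma_3)$ for which $\hat Z$ is smooth of dimension $2$, and then invoke that $S$ is generic in $\mathcal{K}_{12}$ (Proposition~\ref{theorem:r-polarized}, Remark~\ref{remark:direct-proof}). The $2$-dimensional part of $\hat Z$ then has total Pl\"ucker-degree $22 = \deg S$, which forces equality $\hat Z = \Phi_{\scriptscriptstyle E_3}(S)$ and completes the proof.
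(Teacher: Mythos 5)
You should first be aware that the paper contains no proof of Theorem~\ref{theorem:mukai}: it is imported wholesale from \cite[Theorem 5.5]{mukai}, so your attempt is really a reconstruction of Mukai's argument rather than of anything in this text. Your forward inclusion is the standard one and the numerology checks out: the kernel of $\bigwedge^{3}H^{0}(S,E_{3})\to H^{0}(S,L)$ has dimension $35-13=22$; $\chi(S,\bigwedge^{2}E_{3})=18$ via $\bigwedge^{2}E_{3}\simeq E_{3}^{\vee}\otimes L$, leaving a $3$-dimensional kernel in the $21$-dimensional space $\bigwedge^{2}H^{0}(S,E_{3})$ \emph{provided} $h^{1}(S,\bigwedge^{2}E_{3})=0$ and the evaluation map is surjective; the expected dimension $12-1-9=2$ and the degree $22$ (the $V_{22}$ threefold and its hyperplane-section K3) are right. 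But note that the two cohomological inputs you defer are not formalities --- they are a genuine piece of Mukai's analysis of the rigid bundle --- and the Schubert integral $\int_{G(3,7)}c_{1}(\mathcal{O}(1))^{3}c_{3}(\bigwedge^{2}\mathcal{E}_{7})^{3}=22$ is asserted as an expectation rather than computed.

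The step that actually breaks is the last one. To upgrade $S\subseteq\hat{Z}$ to equality you propose to exhibit one tuple $(\lambda,\sigma_{1},\sigma_{2},\sigma_{3})$ with $\hat{Z}$ smooth of dimension $2$ and then ``invoke that $S$ is generic in $\mathcal{K}_{12}$'', citing Proposition~\ref{theorem:r-polarized} and Remark~\ref{remark:direct-proof}. Those statements say the opposite: the surface $S$ of this paper has $\mathrm{Pic}(S)=\mathbb{Z}\cdot H\oplus\mathbb{Z}\cdot C$ of rank $2$, so $(S,H-4C)$ is decidedly \emph{not} a generic point of $\mathcal{K}_{12}$; it is only BN general (Lemma~\ref{theorem:bn-general}), which is the actual hypothesis under which the theorem is stated and under which the paper must apply it. A semicontinuity argument from one good tuple controls the \emph{generic} tuple, not the particular tuple attached to this special $S$ --- and to know that the tuple attached to $S$ lies in the good open set you would already need dominance of the parametrization, which is essentially what is being proved. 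So as written the reverse inclusion is not established for the surface the paper actually uses; one needs an argument (as in \cite{mukai}, or in the spirit of Lemma~\ref{theorem:key-lemma-2} later in the paper, which identifies the locus pointwise) valid for every BN general genus-$12$ surface, or at least a direct verification of purity, reducedness and degree for this specific $\hat{Z}$.
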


\medskip

\begin{remark}[see \cite{mukai-1}, \cite{mukai}, \cite{mukai-6}]
\label{remark:s-12-is-unirational} One may repeat literally the
preceding considerations in the case of any BN general polarized
$\mathrm{K3}$ surface $(S_{22},L_{22})$ of genus $12$. Namely,
$S_{22}$ can be embedded into $G(3,7) \cap \mathbb{P}^{12}$, where
it coincides with the locus $G(3, 7) \cap (\alpha = 0) \cap
(\tau_1 = \tau_2 = \tau_3 = 0)$ for some $\alpha \in
\bigwedge^{3}\mathbb{C}^7$ and $\tau_1, \tau_2, \tau_3 \in
\bigwedge^{2}\mathbb{C}^7$ (so that $\mathcal{O}_{S_{22}}(L_{22})
\simeq \mathcal{O}_{G(3,7)}(1)\big\vert_{S_{22}}$). Conversely,
any such locus, for generic $\alpha$ and $\tau_i$, defines a genus
$12$ BN general polarized $\mathrm{K3}$ (cf.
Example~\ref{example:ex-of-bn-gen}). This construction also shows
that $(S_{22},L_{22})$ is uniquely determined by the
$PGL(7,\mathbb{C})$\,-\,orbits of $\alpha$ and $\tau_i$. One then
arrives at a birational map between $\mathcal{K}_{12}$ and a
$\mathbb{P}^{13}$\,-\,bundle over the orbit space
$G(3,\bigwedge^{2}\mathbb{C}^{7})//PGL(3,\mathbb{C})$. From the
latter fact it is easy to deduce that variety $\mathcal{K}_{12}$
is unirational.

\end{remark}

\medskip

\begin{remark}
\label{remark:dual-construction} Recall that there exists an
isomorphism $\delta : G(3, 7) \to G(4, 7)$ induced by the
canonical bijection between the projective spaces
$\mathbb{P}(\bigwedge^{3}\mathbb{C}^{7})$ and
$\mathbb{P}(\bigwedge^{4}\mathbb{C}^{7})$ (see \cite[Ch. III, \S
11.13]{bourbaki}). Then the composition $\Psi := \delta \circ
\Phi_{\scriptscriptstyle E_{3}} : S \longrightarrow G(4, 7)$
coincides with embedding $\Phi_{\scriptscriptstyle|H - 4C|} : S
\hookrightarrow \mathbb{P}^{12}$. Let $\mathcal{E}_{7}^*$ be the
universal vector bundle on $G(4,7)$. Then we have $\Psi =
\Phi_{\mathcal{E}_{7}^{*}\big\vert_{\scriptscriptstyle S}}$ by
identifying $S$ with $\Psi(S)$. The latter implies that
$\mathcal{E}_{7}^{*}\big\vert_{S} =: E_4$ for the rigid rank $4$
vector bundle $E_4$ on $S$, such that $c_1(E_4) = H - 4C$ and
$h^{0}(S,E_{4}) = 7$ (cf.
Theorem\,-\,definition~\ref{theorem:existence-if-rigid-v-b}).
Indeed, the inclusion $S = \Psi(S) \subset G(4,7) \cap
\mathbb{P}^{12}$ coincides with the morphism
$\Phi_{\scriptscriptstyle E_{4}}$ (see the construction in
Remark~\ref{remark:when-e-gives-a-morphism}), which gives $E_4 =
\mathcal{E}_{7}^{*}\big\vert_{S}$. Furthermore, it follows from
Theorem~\ref{theorem:mukai} that the surface $S \subset G(4,7)
\cap \mathbb{P}^{12}$ coincides with the locus
$$
(\lambda^* = 0) \cap (\sigma_{1}^* = \sigma_{2}^* = \sigma_{3}^* =
0)
$$
for some $\lambda^* \in
H^{0}(G(4,7),\bigwedge^{4}\mathcal{E}_{7}^{*})$ and $\sigma_{i}^*
\in H^{0}(G(4,7),\bigwedge^{2}\mathcal{Q}_{7}^{*})$, where
$\mathcal{Q}_{7}^*$ is the dual of the universal quotient vector
bundle on $G(4,7)$. The same applies to any BN general polarized
$\mathrm{K3}$ surface $(S_{22},L_{22})$ (cf.
Remark~\ref{remark:s-12-is-unirational}).

\end{remark}

\medskip

\refstepcounter{equation}
\subsection{}

Let us now establish several properties of the vector bundle $E_3$
on the $\mathrm{K3}$ surface $S$ introduced in
{\ref{subsection:pre-a-0}}.

\begin{proposition}
\label{theorem:rigid-3-is-h-c-stable} $E_3$ is $(H -
C)$\,-\,stable.\footnote{~We employ the terminology from \cite[Ch.
4]{friedman}.}
\end{proposition}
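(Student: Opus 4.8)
The plan is to prove the stronger assertion that $E_3$ is slope ($\mu$-)stable with respect to $H-C$; since $\mu$-stability implies Gieseker stability, this yields $(H-C)$-stability in the sense of \cite{friedman}. I will work along the segment of polarizations $H-tC$, $t\in[1,4]$, all of which are ample since $H-C$ and $H-4C$ are ample (Lemma~\ref{theorem:polarizations}, Remark~\ref{remark:direct-proof}) and the ample cone is convex. The numerical input I record first: Riemann--Roch on $S$ turns $h^0(S,E_3)=7$ and $H^{>0}(S,E_3)=0$ into $c_2(E_3)=10$, whence the discriminant $\Delta(E_3)=2\cdot3\cdot10-2\,(H-4C)^2=16$. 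Next, $E_3$ is Gieseker stable with respect to its own polarization $c_1(E_3)=H-4C$ (Theorem-definition~\ref{theorem:existence-if-rigid-v-b}). Writing any proper subsheaf as $c_1(F)=aH+bC$ and using $(H^2)=70$, $H\cdot C=2$, $(C^2)=-2$, equality of slopes $\mu_L(F)=\mu_L(E_3)$ would read $3\,(c_1(F)\cdot L)=\mathrm{rk}(F)\,(c_1(E_3)\cdot L)$, whose left side is divisible by $3$ while for $L\in\{H-C,\,H-4C\}$ one has $c_1(E_3)\cdot L\in\{52,22\}\equiv1\pmod 3$ and $\mathrm{rk}(F)\in\{1,2\}$. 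Hence no subsheaf has the same $\mu_L$-slope as $E_3$ for these two $L$, so $\mu_L$-semistability is equivalent to $\mu_L$-stability there; in particular $E_3$ is $\mu_{H-4C}$-stable, and were it not $\mu_{H-C}$-stable it would have to be $\mu_{H-C}$-\emph{unstable}.

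Assume the latter and set $t_0:=\sup\{t\in[1,4]:E_3\text{ is }\mu_{H-tC}\text{-unstable}\}$. Openness of stability, closedness of semistability, $\mu_{H-4C}$-stability, and the absence of equal-slope subsheaves at $t=1$ should force $t_0\in(1,4)$ and make $E_3$ \emph{strictly} $\mu_{H-t_0C}$-semistable. Choosing a saturated Jordan--Hölder term $F\subset E_3$ with $\mu_{H-t_0C}(F)=\mu_{H-t_0C}(E_3)$, the quotient $Q:=E_3/F$ is again $\mu_{H-t_0C}$-semistable of the same slope, so Bogomolov's inequality gives $\Delta(F),\Delta(Q)\geqslant 0$.

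Now put $\xi:=3\,c_1(F)-\mathrm{rk}(F)\,(H-4C)$. Since $\xi\cdot(H-t_0C)=0$ with $H-t_0C$ ample and $\xi\neq0$ (as $H-4C$ is not $3$-divisible in $\mathrm{Pic}(S)$), the Hodge index theorem gives $\xi^2<0$. Expanding Chern classes in $0\to F\to E_3\to Q\to 0$ (an identity verified at once on split bundles) yields, in both cases $\mathrm{rk}(F)\in\{1,2\}$, a relation $\xi^2=-2\,\Delta(E_3)+(\text{non-negative terms in }\Delta(F),\Delta(Q),c_2)$, so that
$$
-32\;=\;-2\,\Delta(E_3)\;\leqslant\;\xi^2\;<\;0 .
$$
Writing $\xi=uH+wC$ with $\xi^2=70u^2+4uw-2w^2$, together with the congruences on $u,w$ imposed by $\mathrm{rk}(F)\in\{1,2\}$, the bound $-32\leqslant\xi^2<0$ leaves only finitely many classes; I expect the enumeration to collapse to the single numerical type $\xi=\pm(H+8C)$, for which $\xi^2=-26$. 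But then $\xi\cdot H=86$ and $\xi\cdot C=-14$, so $\xi\cdot(H-t_0C)=0$ forces $t_0=86/(-14)<0\notin(1,4)$, contradicting $t_0\in(1,4)$. Thus no genuine wall meets the open segment, $E_3$ stays $\mu_{H-tC}$-stable for all $t\in[1,4]$, and in particular $E_3$ is $\mu_{H-C}$-stable.

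The step I expect to be the main obstacle is precisely the finite enumeration combined with the wall-location check: one must confirm that \emph{every} class satisfying the discriminant bound $-32\leqslant\xi^2<0$ and the rank-congruences violates $\xi\cdot(H-t_0C)=0$ for $t_0\in(1,4)$. A secondary point demanding care is the bookkeeping at $t_0$, namely extracting from strict semistability a sub $F$ and quotient $Q$ that are \emph{both} $\mu_{H-t_0C}$-semistable, so that Bogomolov's inequality legitimately bounds $\xi^2$ from below.
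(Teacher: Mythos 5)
Your proof is correct and follows essentially the same route as the paper: both first obtain $\mu_{H-4C}$-stability from the non-divisibility of $\mathrm{rk}(F)\cdot 22$ by $3$, and both convert a hypothetical failure of $(H-C)$-stability into a numerical wall class in $\mathbb{Z}H\oplus\mathbb{Z}C$ whose self-intersection is bounded below by a multiple of the discriminant $\Delta(E_3)=B(E_3)=16$ and which is orthogonal to an ample class on the segment between $H-4C$ and $H-C$, then kill it by a finite lattice enumeration. The only difference is that the paper quotes the wall inequality $-\tfrac{9}{4}B(E_3)\leqslant (Z^2)<0$ from Friedman while you re-derive it via the discriminant identity and Bogomolov's inequality (getting the sharper bound $-r_1r_2\Delta(E_3)=-32$ plus the mod-$3$ congruences); the two steps you flag do go through, and your expected outcome of the enumeration --- only $\xi=\pm(H+8C)$ with $\xi^2=-26$ and wall location $t_0=-43/7\notin(1,4)$ --- checks out.
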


\begin{proof}
Firstly, since $E_3$ is stable, it is $(H - 4C)$\,-\,semistable
(see \cite[Ch. 4]{friedman}). Moreover, $E_3$ is actually $(H -
4C)$\,-\,stable, for otherwise there exists a coherent subsheaf
$F\subseteq E_3$ such that $0 < \text{rank}(F) < 3$ and

$$
c_{1}(F) \cdot (H - 4C) = \frac{22}{3}\text{rank}(F) \not\in
\mathbb{Z},
$$
which is impossible.

Further, if $E_3$ is not $(H - C)$\,-\,stable, then the $(H -
4C)$\,-\,stability of $E_3$ implies that there is a cycle $Z = aH
+ bC$, $a, b\in\mathbb{Z}$, such that

\begin{equation}
\label{first-2-inequalities} (H - 4C) \cdot Z < 0 \leqslant (H -
C) \cdot Z
\end{equation}

and

\begin{equation}
\label{last-inequality} -\frac{\text{rank}(E_{3})^2}{4}\,B(E_{3})
\leqslant (Z^2) < 0
\end{equation}
for the \emph{Bogomolov number}

$$
B(E_{3}) := 2\,\text{rank}(E_{3})\,c_{2}(E_{3}) -
(\text{rank}(E_{3}) - 1)\,c_{1}(E_{3})^{2}
$$
(see \cite[Ch. 9]{friedman}). We have:

$$
\text{rank}(E_{3}) = 3, \qquad c_{1}(E_{3}) = H - 4C, \qquad
\chi(S,E_{3}) = 7,
$$
hence $c_{2}(E_{3}) = 10$ by the Riemann\,--\,Roch formula, and so
$B(E_{3}) = 16$. Then it follows from \eqref{last-inequality} that

\begin{equation}
\label{last-inequality-1} -18 \leqslant 35a^2 + 2ab - b^2 = 36a^2
- c^2
\end{equation}
for $c := a - b$. At the same time, \eqref{first-2-inequalities}
gives

\begin{equation}
\label{first-2-inequalities-1} \frac{36}{5}a - c < 0 \leqslant 18a
- c,
\end{equation}
and thus $a, c > 0$. From \eqref{last-inequality-1} and
\eqref{first-2-inequalities-1} we obtain
$$
-18 < 36a^2 - (\frac{36}{5})^{2}a^2,
$$
or equivalently,
$$
-1 < -\frac{22}{25}a^2,
$$
i.\,e. $a = 1$. Now \eqref{first-2-inequalities-1} implies that $c
\in \{8,9,\ldots,18\}$. But for such values of $a,c$ we get
$$
-18 \leqslant (6 - c)(6 + c) < -18
$$
(see \eqref{last-inequality-1}), which is a contradiction.

Proposition~\ref{theorem:rigid-3-is-h-c-stable} is completely
proved.

\end{proof}

\medskip

\begin{lemma}
\label{theorem:non-zero-a-b-c} $E_{3}\big\vert_C =
\mathcal{O}_{\mathbb{P}^1}(a) \oplus \mathcal{O}_{\mathbb{P}^1}(b)
\oplus \mathcal{O}_{\mathbb{P}^1}(c)$ for some $a\geqslant
b\geqslant c \geqslant 0$.
\end{lemma}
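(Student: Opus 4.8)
The plan is to reduce the entire statement to the global generation of $E_3$. First I would record that $C$ is a $(-2)$-curve on a $\mathrm{K3}$ surface, hence $C \simeq \mathbb{P}^1$, so that the Birkhoff--Grothendieck splitting theorem applies: the rank-$3$ restriction $E_{3}\big\vert_C$ is automatically a direct sum $\mathcal{O}_{\mathbb{P}^1}(a) \oplus \mathcal{O}_{\mathbb{P}^1}(b) \oplus \mathcal{O}_{\mathbb{P}^1}(c)$ of line bundles for some integers $a,b,c$. As a consistency check one finds $a + b + c = \deg\big(E_{3}\big\vert_C\big) = c_1(E_3) \cdot C = (H - 4C) \cdot C = 2 - 4(C^2) = 10$. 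Thus the only real content of the lemma is the nonnegativity of the three summands.

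The key input is item~$(\ref{3-it})$ of Theorem-definition~\ref{theorem:existence-if-rigid-v-b}: the bundle $E_3$ is generated by global sections in the sense of Definition~\ref{definition:generated-by-global-sections}, i.e., the evaluation $ev_{\scriptscriptstyle E_3} : H^{0}(S,E_3) \otimes \mathcal{O}_S \longrightarrow E_3$ is surjective. Restricting this surjection to $C$, I would conclude that $E_{3}\big\vert_C$ is again generated by global sections on $C$, since a restriction of a globally generated sheaf to a closed subvariety is globally generated. It then remains to transfer global generation to the individual splitting factors, which is where I would be most careful: each $\mathcal{O}_{\mathbb{P}^1}(a)$ is a quotient of $E_{3}\big\vert_C$, and a quotient of a globally generated sheaf is globally generated (the composite $H^0(E_{3}\big\vert_C) \otimes \mathcal{O}_{\mathbb{P}^1} \to E_{3}\big\vert_C \to \mathcal{O}_{\mathbb{P}^1}(a)$ is surjective and factors through $H^0(\mathcal{O}_{\mathbb{P}^1}(a)) \otimes \mathcal{O}_{\mathbb{P}^1}$).

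Finally, each summand $\mathcal{O}_{\mathbb{P}^1}(a)$, being globally generated on $\mathbb{P}^1$, must satisfy $a \geqslant 0$, since a line bundle of negative degree on $\mathbb{P}^1$ has no nonzero global sections and therefore cannot be generated by them. Applying the same to $b$ and $c$ yields $a, b, c \geqslant 0$, as claimed. I do not expect a serious obstacle: the proof is a direct assembly of the Birkhoff--Grothendieck theorem with the global generation of $E_3$ already recorded in Theorem-definition~\ref{theorem:existence-if-rigid-v-b}. The only step deserving explicit comment is the descent of global generation from $E_{3}\big\vert_C$ to its splitting factors, handled by the elementary observation on quotients above.
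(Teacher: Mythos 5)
Your proof is correct and rests on the same key fact as the paper's: the global generation of $E_3$ (item (3) of Theorem-definition 2.6) is incompatible with a negative-degree summand in the Birkhoff--Grothendieck splitting of $E_{3}\big\vert_C$. The paper phrases this as the vanishing of $s_1 \wedge s_2 \wedge s_3$ on $C$ for all global sections contradicting the construction of $\Phi_{E_3}$, which is the same obstruction you identify via quotients of globally generated sheaves.
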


\begin{proof}
Indeed, if $a < 0$, say, then $s_1 \wedge s_2 \wedge s_3 = 0$ on
$C$ for all $s_{1}, s_{2}, s_3 \in H^{0}(S, E_{3})$, which is
impossible by construction of embedding $\Phi_{\scriptscriptstyle
E_{3}}$ (cf. Remark~\ref{remark:when-e-gives-a-morphism}).

\end{proof}

\medskip

\begin{lemma}
\label{theorem:non-zero-sections} For any two (distinct) generic
global sections $s_1, s_2 \in H^{0}(S, E_{3})$, the zero locus of
the global section $s_1 \wedge s_2 \in
H^{0}(S,\bigwedge^{2}E_{3})$ is of codimension $\geqslant 2$ on
$S$.
\end{lemma}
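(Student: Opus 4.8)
The plan is to reduce the assertion to the vanishing of a single section of a rank-$2$ bundle and then to invoke a Bertini-type theorem. Applying Bertini naively to $\bigwedge^{2}E_3$ fails, because $s_1 \wedge s_2$ is a \emph{decomposable} section, and such sections form a proper subvariety of $H^{0}(S,\bigwedge^{2}E_3)$; their zero loci behave like the rank-$\leqslant 1$ degeneracy loci of the map $\mathcal{O}_S^{\oplus 2} \to E_3$, of expected codimension $2$, rather than like zero loci of generic sections of a rank-$3$ bundle (which would force codimension $3$, hence emptiness). The reduction below makes this precise.

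First I would show that a generic $s_1 \in H^{0}(S,E_3)$ is nowhere vanishing. Since $E_3$ is generated by global sections (Theorem-definition~\ref{theorem:existence-if-rigid-v-b}(\ref{3-it})), the evaluation $H^{0}(S,E_3) \to (E_3)_x$ is surjective at every $x \in S$, so $\{s : s(x) = 0\}$ has codimension $3 = \mathrm{rank}(E_3)$ in $H^{0}(S,E_3)$. Hence the incidence variety $\{(x,s) : s(x) = 0\} \subset S \times H^{0}(S,E_3)$ has dimension $2 + (7 - 3) = 6 < 7 = \dim H^{0}(S,E_3)$, and its projection to $H^{0}(S,E_3)$ omits a generic $s_1$, which is therefore nowhere zero. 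Because $\mathrm{rank}(E_3) = 3 > 2 = \dim S$, such an $s_1$ realizes $\mathcal{O}_S$ as a subbundle of $E_3$ via a fibrewise injective inclusion $\mathcal{O}_S \xrightarrow{s_1} E_3$, with locally free quotient $Q := E_3/\mathcal{O}_S\cdot s_1$ of rank $2$; being a quotient of a globally generated bundle, $Q$ is globally generated.

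Next I would identify the two zero loci. As $s_1$ is nowhere zero, the sheaf map $v \mapsto s_1 \wedge v$ from $E_3$ to $\bigwedge^{2}E_3$ has constant rank $2$, kernel $\mathcal{O}_S\cdot s_1$, and image a subbundle isomorphic to $Q$; under this identification $s_1 \wedge s_2$ corresponds to the image $\bar{s}_2 \in H^{0}(S,Q)$ of $s_2$. Set-theoretically a point $x \in S$ lies in the zero locus of $s_1 \wedge s_2$ exactly when $s_2(x) \in \langle s_1(x)\rangle$, i.e. when $\bar{s}_2(x) = 0$, so the zero locus of $s_1 \wedge s_2$ equals that of $\bar{s}_2$. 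It remains to arrange that $\bar{s}_2$ is a general section of $Q$: from $0 \to \mathcal{O}_S \to E_3 \to Q \to 0$ and $H^{1}(S,\mathcal{O}_S) = 0$ (as $S$ is a $\mathrm{K3}$ surface) the map $H^{0}(S,E_3) \to H^{0}(S,Q)$ is surjective, so a generic $s_2$ produces a generic $\bar{s}_2$. Finally, for a globally generated rank-$2$ bundle on the smooth surface $S$, a generic section vanishes in codimension $2$ (a finite set of points); hence the zero locus of $s_1 \wedge s_2$ has codimension $\geqslant 2$, as claimed.

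The main obstacle is precisely this last transversality step: one must know that the general member of $H^{0}(S,Q)$ has no divisorial zeros, even though $Q$ need not be very ample. This is where global generation of $Q$ is essential, via Kleiman's transversality (generic smoothness) applied to the surjection $H^{0}(S,Q)\otimes\mathcal{O}_S \to Q$. If one prefers to avoid Bertini altogether, the same conclusion follows from a dimension count on $G\big(2,H^{0}(S,E_3)\big) = G(2,7)$: the relative Schubert locus $\{(x,[V]) : V \cap \ker(\mathrm{ev}_x) \neq 0\}$ fibres over $S$ with fibres of dimension $8$, hence has dimension $10 = \dim G(2,7)$, so a generic $2$-plane $V = \langle s_1,s_2\rangle$ carries a finite (or empty) degeneracy locus.
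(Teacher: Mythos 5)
Your proof is correct, but it takes a genuinely different route from the paper's. The paper argues by contradiction via the intersection theory of $S$: after invoking Lemma~\ref{theorem:non-zero-a-b-c} to rule out $s_1 \wedge s_2 \equiv 0$, it supposes $s_1 \wedge s_2$ vanishes along an irreducible curve $Z$, notes that $s_1 \wedge s_2$ does not vanish identically on the $(-2)$-curve $C$ (again by Lemma~\ref{theorem:non-zero-a-b-c} together with results of Mukai), deduces $Z \cdot C = 0$ and hence $Z \sim n(H+C)$ in the explicit lattice $\mathrm{Pic}(S) = \mathbb{Z}H \oplus \mathbb{Z}C$, and then derives a contradiction because $Z$ must be contained in a divisor of class $H - 4C$ while $H - 4C - n(H+C)$ is never effective. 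Your argument uses none of this: it relies only on $E_3$ being globally generated of rank $3 > \dim S$ and on $H^{1}(S,\mathcal{O}_S) = 0$, reducing to a generic section of the globally generated rank-$2$ quotient $Q = E_3/\mathcal{O}_S\, s_1$ and finishing with the standard incidence-variety count (your alternative count on $G(2,7)$ is the same argument in its most economical form). Your route is cleaner and strictly more general --- it would prove the analogous statement for any globally generated bundle of rank exceeding $\dim S$ on a regular surface, including the claim about $E_4$ in Remark~\ref{remark:dual-construction-1}, and it does not depend on the particular Picard lattice of $S$. What the paper's route buys is explicit control of the degeneracy behaviour along the distinguished curve $C$, which the author reuses (e.g.\ in the footnote to the proof of Lemma~\ref{theorem:key-lemma-1}); your argument yields finiteness of the degeneracy locus but says nothing about its position relative to $C$. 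Both proofs are valid.
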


\begin{proof}
Lemma~\ref{theorem:non-zero-a-b-c} implies that one may assume the
zero locus of $s_1 \wedge s_2$ to be of codimension $\geqslant 1$.
Suppose $s_1 \wedge s_2 = 0$ on a curve $Z \subset S$. Note that
$s_1 \wedge s_2 \ne 0$ everywhere on $C$ by
Lemma~\ref{theorem:non-zero-a-b-c} and the results from
\cite[Section 1]{mukai-4}. We get $Z \cdot C = 0$ and $Z \sim m(H
+ C)$ for some $m \in \mathbb{N}$. On the other hand, the
condition $s_1 \wedge s_2 = 0$ on $Z$ implies that $Z \subseteq
(s_1 \wedge s_2 \wedge s' = 0)$ for any $s'\in H^{0}(S, E_{3})$,
i.\,e. we have $H - 4C - m(H + C) \geqslant 0$ --- a
contradiction.

\end{proof}

\medskip

\begin{remark}
\label{remark:dual-construction-1} Running the same arguments as
in the proof of Proposition~\ref{theorem:rigid-3-is-h-c-stable}
and Lemmas~\ref{theorem:non-zero-a-b-c},
\ref{theorem:non-zero-sections}, we arrive at similar results for
the rigid vector bundle $E_4$ on $S$ (see
Remark~\ref{remark:dual-construction}). Namely, one can show that
$E_4$ is $H$\,-\,stable and for any three generic global sections
$s_1, s_2, s_3 \in H^{0}(S, E_{4})$ the zero locus of the global
section $s_1 \wedge s_2 \wedge s_3 \in
H^{0}(S,\bigwedge^{3}E_{4})$ is of codimension $\geqslant 2$ on
$S$.

\end{remark}

\bigskip

\section{Proof of Theorem~\ref{theorem:main-1}}
\label{section:eq-1}

\refstepcounter{equation}
\subsection{}

Fix $S$ and $E_3$ as in Section~\ref{section:preliminaries}.

\begin{lemma}
\label{theorem:wedge-e-3} $E_{3} \otimes \mathcal{O}_{S}(C)$ is a
rigid vector bundle of rank $3$ and such that

\begin{itemize}

\item $c_1(E_{3} \otimes \mathcal{O}_{S}(C)) = H - C$;

\smallskip

\item $h^{0}(S,E_{3} \otimes \mathcal{O}_{S}(C)) =
14$.

\end{itemize}

\end{lemma}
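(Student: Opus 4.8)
The plan is to identify $E_3 \otimes \mathcal{O}_{S}(C)$ with the \emph{rigid} vector bundle of rank $3$ attached to the polarized surface $(S, H - C)$ by Theorem-definition~\ref{theorem:existence-if-rigid-v-b}, and then read off $h^{0} = 14$ from its defining properties. The point of this route is that it lets me avoid proving the vanishing $H^{1}(S, E_3 \otimes \mathcal{O}_{S}(C)) = 0$ by hand, which would otherwise be the sharpest part of the argument. First, the elementary invariants: tensoring by a line bundle does not change the rank, so $\mathrm{rank}(E_3 \otimes \mathcal{O}_{S}(C)) = 3$, and the standard twist formula $c_1(E \otimes L) = c_1(E) + \mathrm{rank}(E) \cdot c_1(L)$ gives
$$
c_1\big(E_3 \otimes \mathcal{O}_{S}(C)\big) = (H - 4C) + 3C = H - C,
$$
which is the first assertion.

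For the cohomology I would compute $\chi(S, E_3 \otimes \mathcal{O}_{S}(C))$ via Riemann--Roch on the $\mathrm{K3}$ surface $S$, in the form $\chi(S, E) = 2\,\mathrm{rank}(E) + \tfrac{1}{2}c_1(E)^{2} - c_2(E)$. Using $c_2(E_3) = 10$ (established in the proof of Proposition~\ref{theorem:rigid-3-is-h-c-stable}) together with the intersection numbers $(H^2) = 70$, $H \cdot C = 2$, $(C^2) = -2$ from Proposition~\ref{theorem:r-polarized}, the twist formula for the second Chern class yields
$$
c_2\big(E_3 \otimes \mathcal{O}_{S}(C)\big) = c_2(E_3) + 2\,(H - 4C)\cdot C + 3\,(C^2) = 10 + 20 - 6 = 24.
$$
Since $(H - C)^{2} = 64$, this gives $\chi(S, E_3 \otimes \mathcal{O}_{S}(C)) = 6 + 32 - 24 = 14$ (consistent with $(S, H - C)$ having genus $33 = 3 \cdot 11$, so that the expected value $r + s$ equals $14$).

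It remains to invoke the uniqueness part of Theorem-definition~\ref{theorem:existence-if-rigid-v-b}. Because $E_3$ is $(H - C)$-stable by Proposition~\ref{theorem:rigid-3-is-h-c-stable} and slope stability is preserved under tensoring by a line bundle, $E_3 \otimes \mathcal{O}_{S}(C)$ is $(H - C)$-stable, hence Gieseker stable with respect to $H - C$. The surface $(S, H - C)$ is BN general of genus $33$ by Remark~\ref{remark:direct-proof-1}, and $33 = 3 \cdot 11$, so Theorem-definition~\ref{theorem:existence-if-rigid-v-b} applies with $(r, s) = (3, 11)$. Using Remark~\ref{remark:replace-h-0-with-chi} to replace condition $(\ref{2-it})$ by $\chi(S, E_3 \otimes \mathcal{O}_{S}(C)) = 14$, the two hypotheses $(\ref{1-it})$ and $(\ref{2-it})$ of the uniqueness clause $(\ref{4-it})$ are exactly $c_1 = H - C$ and $\chi = 14$, both verified above. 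Therefore $E_3 \otimes \mathcal{O}_{S}(C)$ is isomorphic to the rigid bundle of rank $3$ with first Chern class $H - C$; in particular it is rigid, and its defining property $(\ref{2-it})$ delivers $h^{0}(S, E_3 \otimes \mathcal{O}_{S}(C)) = r + s = 14$ together with the higher vanishing. The main obstacle is precisely the legitimacy of this last reduction: one must check that $(S, H - C)$ genuinely falls under the hypotheses of Theorem-definition~\ref{theorem:existence-if-rigid-v-b} (BN generality and the factorization $33 = 3 \cdot 11$), since it is this that converts the numerical identity $\chi = 14$ into the actual vanishing $H^{i} = 0$, $i > 0$, and the value $h^{0} = 14$.
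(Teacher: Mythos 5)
Your proposal is correct and follows essentially the same route as the paper: compute $c_1$, verify $\chi=14$, deduce $(H-C)$-stability of the twist from Proposition~\ref{theorem:rigid-3-is-h-c-stable}, and invoke Theorem-definition~\ref{theorem:existence-if-rigid-v-b} together with Remark~\ref{remark:replace-h-0-with-chi} for $(S,H-C)$ BN general of genus $33=3\cdot 11$. The only (immaterial) difference is that you compute $\chi$ by Riemann--Roch on $S$ via the twist formula for $c_2$, whereas the paper restricts to $C$ and adds $\chi(S,E_3)=7$ to $\chi(C,(E_3\otimes\mathcal{O}_S(C))\big\vert_C)=7$; both yield $14$.
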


\begin{proof}
The equality $c_1(E_{3}) = H - 4C$ implies that
$$
c_1(E_{3} \otimes \mathcal{O}_{S}(C)) = H - C
$$
for the rank $3$ vector bundle $E_3$. Consider the exact sequence
$$
0 \longrightarrow \mathcal{O}_{S}(E_{3}) \longrightarrow
\mathcal{O}_{S}(E_{3} \otimes \mathcal{O}_{S}(C)) \longrightarrow
\mathcal{O}_{C}(E_{3}(-2)) \longrightarrow 0.
$$
From $\chi(S,E_{3}) = 7$ and the Riemann\,--\,Roch formula on $C$
we deduce
$$
\chi(S, E_{3}\otimes \mathcal{O}_{S}(E)) = \chi(S, E_{3}) +
\chi(C, E_{3}(-2)) = 7 + 7 = 14
$$
because $\deg (E_{3}(-2)) = (H - C) \cdot C = 4$ for the rank $3$
vector bundle $E_{3}(-2) := (E_{3} \otimes
\mathcal{O}_{S}(C))\big\vert_C$ on $C \simeq \mathbb{P}^1$.

Further, since $E_3$ is $(H - C)$\,-\,stable (see
Proposition~\ref{theorem:rigid-3-is-h-c-stable}), $E_{3} \otimes
\mathcal{O}_{S}(C)$ is also $(H - C)$\,-\,stable, and hence $E_{3}
\otimes \mathcal{O}_{S}(C)$ is stable (see \cite[Ch.
4]{friedman}). Then, as $(S,H - C)$ is BN general of genus $33$
(see Remark~\ref{remark:direct-proof-1}),
Theorem\,-\,definition~\ref{theorem:existence-if-rigid-v-b} and
Remark~\ref{remark:replace-h-0-with-chi} complete the proof.

\end{proof}

\medskip

Put $\widetilde{E}_3 := E_3 \otimes \mathcal{O}_{S}(C)$.
Lemma~\ref{theorem:wedge-e-3} and
Theorem\,-\,definition~\ref{theorem:existence-if-rigid-v-b} imply
that the morphism $\Phi_{\scriptscriptstyle\widetilde{E}_3} : S
\longrightarrow G(3,14) \subset
\mathbb{P}(\bigwedge^{3}\mathbb{C}^{14})$ coincides with
embedding $\Phi_{\scriptscriptstyle|H - C|} : S \hookrightarrow
\mathbb{P}^{33}$. In what follows, we identify $S$ with its image
$$
\Phi_{\scriptscriptstyle\widetilde{E}_3}(S) \subset G(3,14) \cap
\mathbb{P}^{33} \subset
\mathbb{P}\big(\bigwedge^{3}\mathbb{C}^{14}\big)
$$
and $\widetilde{E}_3$ with $\mathcal{E}_{14}\big\vert_{S}$, where
$\mathcal{E}_{14}$ is the universal vector bundle on $G(3,14)$.

\refstepcounter{equation}
\subsection{}

Let us find the defining equations for $S \subset G(3,14)$.

Choose a basis $\{s_{1}, \ldots, s_{7}\}$ in $H^{0}(S, E_{3})$ for
some global sections $s_i$ (to be specified later). Let also $t
\in H^{0}(S, \mathcal{O}_{S}(C))$ be the unique (up to a
$\mathbb{C}^*$\,-\,multiple) global section. Then $\{ts_{1},
\ldots, ts_{7}, \xi_{1}, \ldots, \xi_{7}\}$ is a basis in
$H^{0}(S, \widetilde{E}_3)$ for some global sections $\xi_i$ of
$\widetilde{E}_3$ (cf. Lemma~\ref{theorem:wedge-e-3}).

\begin{proposition}
\label{theorem:key-lemma} For every $i$, there exists $\lambda_i
\in H^{0}(S, \bigwedge^{3}E_{3})$ such that
$$
\xi_i \wedge ts_{i1} \wedge ts_{i2} = t^3\lambda_i,
$$
with appropriate $s_{i1},s_{i2}\in H^0(S,E_3)$ (depending on $i$).
\end{proposition}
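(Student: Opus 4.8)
The plan is to prove the identity by factoring out powers of $t$. Since $ts_1,ts_2$ are the images of $s_1,s_2$ under the inclusion $H^{0}(S,E_{3})\hookrightarrow H^{0}(S,\tilde{E_{3}})$, $s\mapsto ts$, induced by $t\in H^{0}(S,\mathcal{O}_{S}(C))$, one has $ts_1\wedge ts_2=t^{2}(s_1\wedge s_2)$ in $H^{0}(S,\bigwedge^{2}\tilde{E_{3}})=H^{0}(S,\bigwedge^{2}E_{3}\otimes\mathcal{O}_{S}(2C))$. Wedging with $\xi_i$ and using $\det E_{3}=\mathcal{O}_{S}(H-4C)$ gives
$$\xi_i\wedge ts_1\wedge ts_2=t^{2}\,(\xi_i\wedge s_1\wedge s_2),$$
where $\xi_i\wedge s_1\wedge s_2$ is naturally a section of $\bigwedge^{3}E_{3}\otimes\mathcal{O}_{S}(C)=\mathcal{O}_{S}(H-3C)$. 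Hence the proposition is equivalent to the assertion that $\xi_i\wedge s_1\wedge s_2$ is divisible by $t$, the quotient being the desired $\lambda_i\in H^{0}(S,\bigwedge^{3}E_{3})$; this quotient is unique since $t$ is not a zero-divisor.

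To turn divisibility by $t$ into a vanishing statement I would use the exact sequence
$$0\longrightarrow\bigwedge^{3}E_{3}\stackrel{t}{\longrightarrow}\bigwedge^{3}E_{3}\otimes\mathcal{O}_{S}(C)\longrightarrow\big(\bigwedge^{3}E_{3}\otimes\mathcal{O}_{S}(C)\big)\big\vert_{C}\longrightarrow0,$$
the $\bigwedge^{3}$-analogue of the sequence in the proof of Lemma~\ref{theorem:wedge-e-3}. Its cohomology shows that $t\cdot H^{0}(S,\bigwedge^{3}E_{3})$ is precisely the kernel of the restriction map $H^{0}(S,\mathcal{O}_{S}(H-3C))\to H^{0}(C,\mathcal{O}_{\mathbb{P}^{1}}(8))$ (here $(H-3C)\cdot C=8$). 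So it remains to prove
$$\big(\xi_i\wedge s_1\wedge s_2\big)\big\vert_{C}=\xi_i\big\vert_{C}\wedge s_1\big\vert_{C}\wedge s_2\big\vert_{C}=0.$$
For this I would first record, from $0\to E_{3}\stackrel{t}{\to}\tilde{E_{3}}\to\tilde{E_{3}}\vert_{C}\to0$ together with $H^{1}(S,E_{3})=0$, that $ts_j\vert_{C}=0$ for all $j$ while the $\xi_i\vert_{C}$ form a basis of $H^{0}(C,\tilde{E_{3}}\vert_{C})$.

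The vanishing of this restriction is the step I expect to be the main obstacle. I would analyse it through the splitting $E_{3}\vert_{C}=\mathcal{O}_{\mathbb{P}^{1}}(a)\oplus\mathcal{O}_{\mathbb{P}^{1}}(b)\oplus\mathcal{O}_{\mathbb{P}^{1}}(c)$ of Lemma~\ref{theorem:non-zero-a-b-c}, where $a+b+c=(H-4C)\cdot C=10$ and global generation of $\tilde{E_{3}}\vert_{C}=E_{3}\vert_{C}\otimes\mathcal{O}_{\mathbb{P}^{1}}(-2)$ (Theorem-definition~\ref{theorem:existence-if-rigid-v-b}) forces $a,b,c\geqslant2$. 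Writing the wedge as the value at $\xi_i\vert_{C}$ of the bundle map $\tilde{E_{3}}\vert_{C}\to\mathcal{O}_{\mathbb{P}^{1}}(8)$, $v\mapsto v\wedge(s_1\vert_{C}\wedge s_2\vert_{C})$, what has to be shown is that the two-form $s_1\vert_{C}\wedge s_2\vert_{C}\in H^{0}(C,\bigwedge^{2}E_{3}\vert_{C})$ pairs to zero with all of $H^{0}(C,\tilde{E_{3}}\vert_{C})$. This is genuinely delicate: Lemma~\ref{theorem:non-zero-sections} only gives $s_1\wedge s_2\neq0$ on $C$, so the vanishing cannot come from degree reasons alone and must exploit the special position of $s_1\vert_{C},s_2\vert_{C}$. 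Concretely, I would bring in the compatibility of the restriction $H^{0}(S,E_{3})\to H^{0}(C,E_{3}\vert_{C})$ with Mukai's description of $S\subset G(3,7)$ in Theorem~\ref{theorem:mukai} — the quadratic relations $\sigma_1,\sigma_2,\sigma_3$ cutting out $S$ constrain $s_1\vert_{C}\wedge s_2\vert_{C}$ to the subspace coming from the genus~$12$ data — and combine this with the $(H-C)$-stability of $E_{3}$ (Proposition~\ref{theorem:rigid-3-is-h-c-stable}) to rule out any nonzero pairing. Once the restriction is shown to vanish, dividing successively by $t$ and then by $t^{2}$ produces $\lambda_i$ and completes the proof.
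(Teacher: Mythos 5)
Your reduction is correct and is, in substance, the same as the paper's: the paper writes $\xi_i$ locally in a frame extending $s_1,s_2$ (using Lemma~\ref{theorem:non-zero-sections}) and manipulates the coefficient functions, but this amounts exactly to your global identity $\xi_i\wedge ts_1\wedge ts_2=t^{2}(\xi_i\wedge s_1\wedge s_2)$ together with the claim that $\xi_i\wedge s_1\wedge s_2\in H^{0}(S,\mathcal{O}_{S}(H-3C))$ is divisible by $t$, i.e.\ vanishes along $C$. So you have correctly isolated the crux. The gap is that you do not prove this vanishing: you label it ``the main obstacle'' and gesture at the relations $\sigma_r$ from Theorem~\ref{theorem:mukai} and at $(H-C)$-stability, but no argument is given, and neither ingredient visibly produces the identity $\xi_i\vert_{C}\wedge s_1\vert_{C}\wedge s_2\vert_{C}=0$. (For comparison, the paper disposes of precisely this point in a footnote, asserting that Lemma~\ref{theorem:non-zero-a-b-c}, applied to $E_3$ and $\tilde{E_{3}}$, gives $\xi_i\wedge s_1\wedge s_2=0$ on $C$ ``for suitable $\xi_i,s_1,s_2$''.)

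Worse, the facts you have already assembled work against the vanishing rather than for it. You observe (correctly) that the $\xi_i\vert_{C}$ form a basis of $H^{0}(C,\tilde{E_{3}}\vert_{C})$ and that $\tilde{E_{3}}\vert_{C}$ is globally generated, so for every $p\in C$ the values $\xi_1(p),\dots,\xi_7(p)$ span the fibre $\tilde{E_{3}}\vert_{p}$. On the other hand $E_3$ itself is globally generated and $s_1,s_2$ are generic, so $s_1(p)\wedge s_2(p)\neq 0$ for all but finitely many $p\in C$ --- this is exactly the assertion ``$s_1\wedge s_2\neq 0$ on $C$'' invoked in the proof of Lemma~\ref{theorem:non-zero-sections}. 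At such a point the functional $v\mapsto v\wedge s_1(p)\wedge s_2(p)$ on the rank-$3$ fibre has $2$-dimensional kernel and therefore cannot annihilate a spanning set; hence for generic $\xi_i$ at least one (indeed each) $\xi_i\wedge s_1\wedge s_2$ is nonzero somewhere on $C$. Any completion of your argument must explain how this tension is resolved (for instance by a special, non-generic choice of the complementary basis $\{\xi_i\}$, which would then have to be reconciled with the quantifier ``for every $i$''); as written, the statement you still need to prove is contradicted by your own correct intermediate steps. This is a genuine gap, and it sits exactly at the point where the paper's own proof is thinnest.
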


\begin{proof}
Consider an affine cover $\{U_{\alpha}\}$ of $S$ such that
$E_{3}\big\vert_{U_{\alpha}}$ is trivial and $t_{\alpha} :=
t\big\vert_{U_{\alpha}} \in \mathcal{O}(U_{\alpha})$ for all
$\alpha$. Put also $s_{j,\alpha} := s_{j}\big\vert_{U_{\alpha}}$
for all $j$. By Lemma~\ref{theorem:non-zero-sections}, we can
complete $\{s_{1,\alpha}, s_{2,\alpha}\}$ to a basis of
$E_{3}\big\vert_{U_{\alpha}\setminus{\Gamma}}$ for each $\alpha$,
after choosing $s_1,s_2$ appropriately. Here $\Gamma$ is a
codimension $\geqslant 2$ subset in $S$. Then the construction of
embeddings $\Phi_{\scriptscriptstyle E_{3}}$,
$\Phi_{\scriptscriptstyle\widetilde{E}_3}$ yields

\begin{equation}
\label{local-xi-i} \xi_{i}\big\vert_{U_{\alpha}\setminus{\Gamma}}
= f_{1, \alpha}s_{1,\alpha} + f_{2, \alpha}s_{2,\alpha} +
f_{\alpha}s'_{\alpha}
\end{equation}
for all $i,\alpha$ and some $s'_{\alpha}\in H^0(S,E_3)$. (Here all
$f_{j, \alpha},f_{\alpha} \in \mathcal{O}(U_{\alpha})$ depend on
$i$, and $s'_{\alpha}$ a priori depends on both $i$ and $\alpha$.)
In particular, we have
$$
\big(\xi_i \wedge ts_1 \wedge
ts_{2}\big)\big\vert_{U_{\alpha}\setminus{\Gamma}} =
f_{\alpha}s'_{\alpha} \wedge t_{\alpha}s_{1,\alpha} \wedge
t_{\alpha}s_{2,\alpha} =: F_{i, \alpha},
$$
so that the collection $\{(F_{i, \alpha},
U_{\alpha}\setminus{\Gamma})\}$ defines a global holomorphic
section of $\mathcal{O}_{S}(H - C)$ on $S\setminus{\Gamma}$, hence
on $S$.

\begin{lemma}
\label{theorem:key-lemma-1} One can choose $s_1 =: s_{i1}$ and
$s_2 =: s_{i2}$ in such a way that $F_{i, \alpha} =
t_{\alpha}^3\lambda_i\big\vert_{U_{\alpha}}$ for all $\alpha$ and
some $\lambda_i \in H^{0}(S, \bigwedge^{3}E_{3})$ (not depending
on $\alpha$).
\end{lemma}

\begin{proof}
We have $s'_{\alpha} \wedge ts_{1} \wedge ts_{2} \in H^{0}(S,
\mathcal{O}_{S}(H - 2C))$. This implies that
$$
f_{\alpha} = \frac{F_{i, \alpha}}{s'_{\alpha} \wedge
t_{\alpha}s_{1,\alpha} \wedge t_{\alpha}s_{2,\alpha}} =
\frac{\xi_i \wedge ts_1 \wedge ts_{2}}{s'_{\alpha} \wedge ts_{1}
\wedge ts_{2}}\big\vert_{U_{\alpha}} = t_{\alpha}g_i
$$
for some meromorphic function $g_i\in\mathbb{C}(S)$ (depending on
$\xi_i$). Let us choose $g_i$ to have no poles along $C$.

Note that any global section of $\widetilde{E}_3\big\vert_C =
E_3(-2)$ extends to an appropriate $\xi_i$ (apply \eqref{2-it} of
Theorem\,-\,definition~\ref{theorem:existence-if-rigid-v-b} to the
exact sequence from the proof of Lemma~\ref{theorem:wedge-e-3}).
Furthermore, since $h^0(C,E_3(-2)) = 7$, one can see that $b + c
\leqslant 6$ and $b\leqslant 4$ in
Lemma~\ref{theorem:non-zero-a-b-c}, with $c\geqslant 2$ as well by
construction of $\Phi_{\scriptscriptstyle\widetilde{E}_3}$.

Now $s_1,s_2$, after restricting to $C$, can be brought to either
of the forms (same for both $j$) $s_{ij} = (*,*,0)$ or $s_{ij} =
(*,0,*)$. We may assume w.\,l.\,o.\,g. that $\xi_i = (0,0,1)$ and
$s_1,s_2$ are $(*,0,*)$. This yields $\xi_i\wedge s_1\wedge s_2 =
0$ on $C\cap U_{\alpha}$ and so $g_i$ does not have poles along
$C$.

Thus we obtain $\xi_i \wedge ts_1 \wedge ts_{2} = g_its'_{\alpha}
\wedge ts_{1} \wedge ts_{2} = t^3\lambda_i$, with $\lambda_i :=
g_is'_{\alpha} \wedge s_{1} \wedge s_{2}\in
H^{0}(S,\bigwedge^{3}E_{3})$, because $t^3\lambda_i = \xi_i \wedge
ts_1 \wedge ts_{2}$ is a \emph{regular} section of
$\widetilde{E}_3$ and $\lambda_i$ does not have poles along $C$.

\end{proof}

\medskip

Lemma~\ref{theorem:key-lemma-1} proves
Proposition~\ref{theorem:key-lemma}.

\end{proof}

\medskip

\begin{corollary}
\label{theorem:key-cor} For all indices $i$, there exist
$\lambda'_i, \lambda''_i \in H^{0}(S,\bigwedge^{3}E_{3})$ (in
addition to $\lambda_i$ from Proposition~\ref{theorem:key-lemma})
such that
$$
\xi_i \wedge ts_{i2} \wedge ts_{i3} - t^3\lambda'_i = \xi_i \wedge
ts_{i3} \wedge ts_{i1} - t^3\lambda''_i = 0,
$$
with appropriate linearly independent $s_{i1},s_{i2},s_{i3}\in
H^0(S,E_3)$.
\end{corollary}

\begin{proof}
We retain the notation from the proof of
Proposition~\ref{theorem:key-lemma}. Note that since
$\Phi_{\scriptscriptstyle E_3}(C)\subset\mathbb{P}^{10}\cap
G(3,7)$, one may assume that $s_1\wedge s_2\wedge s_3 = 0$ on $C$,
after possibly replacing $S$ by another surface projectively
equivalent to it. Then regarding $s_1,s_2,s_3$ as vectors in a
$3$\,-\,dimensional linear $\mathbb{C}(C)$\,-\,space, we obtain
$s_{ij} := s_j$, $1\leqslant j\leqslant 3$, such that $\xi_i\wedge
s_{i1}\wedge s_{i2} =\ldots =\xi_i\wedge s_{i3}\wedge s_{i1}= 0$
on $C\cap U_{\alpha}$. This implies the claim exactly as in the
proof of Lemma~\ref{theorem:key-lemma-1}.

\end{proof}

\medskip

\begin{remark}
\label{remark:i-not-j-tup} Given two generic $\xi_i \ne \xi_j$ as
above, we observe that the pairs $(s_{ik},s_{ik'})$, all  $1 \le
k,k' \le 3$, are distinct from the corresponding pairs
$(s_{jk},s_{jk'})$. Indeed, the only place where dependence on $i$
for $s_{i1},s_{i2}$ appears is in the proof of
Lemma~\ref{theorem:key-lemma-1} (same for $j$ and other $k$), when
we have put $\xi_i = (0,0,1)$. Using the generality of
$\xi_i,\xi_j$, it is then possible to set $\xi_j = (1,0,0)$, say,
which allows one to take $s_{j1},s_{j2}$ in the form $(*,*,0)$ for
instance. This establishes our assertion.

\end{remark}

\medskip

\refstepcounter{equation}
\subsection{}
\label{subsection:pro-a-0}

Take $\lambda, \sigma_{1}, \sigma_{2}, \sigma_3$ as in
Theorem~\ref{theorem:mukai} and write
$$
\lambda = \sum_{1 \leqslant j_{1}, j_{2}, j_{3} \leqslant 7}
\alpha_{j_{1}, j_{2}, j_{3}}~s_{j_{1}} \wedge s_{j_{2}} \wedge
s_{j_{3}}, \qquad \sigma_{r} = \sum_{1 \leqslant i, j \leqslant 7}
\alpha_{i, j}^{(r)}~s_{i} \wedge s_{j}
$$
for some $\alpha_{j_{1}, j_{2}, j_{3}}, \alpha_{i, j}^{(r)} \in
\mathbb{C}$. We may assume that
$$
\lambda \in \bigwedge^{3}H^{0}\big(S,E_{3} \otimes
\mathcal{O}_{S}\big(C\big)\big) = H^{0}\big(G(3, 14\big),
\bigwedge^{3}\mathcal{E}_{14}\big) \simeq
\bigwedge^{3}\mathbb{C}^{14}
$$
and
$$
\sigma_{1},\sigma_{2},\sigma_{3} \in
\bigwedge^{2}H^{0}\big(S,E_{3} \otimes
\mathcal{O}_{S}\big(C\big)\big) = H^{0}\big(G\big(3, 14\big),
\bigwedge^{2}\mathcal{E}_{14}\big) \simeq
\bigwedge^{2}\mathbb{C}^{14}
$$
by identifying $\lambda$ and $\sigma_{r}$ with
$$
\sum_{1 \leqslant j_{1}, j_{2}, j_{3} \leqslant 7} \alpha_{j_{1},
j_{2}, j_{3}}~ts_{j_{1}} \wedge ts_{j_{2}} \wedge ts_{j_{3}}
\qquad\mbox{and}\qquad \sum_{1 \leqslant i, j \leqslant 7}
\alpha_{i, j}^{(r)}~ts_{i} \wedge ts_{j},
$$
respectively.

Let also $\Lambda$ be the linear subspace in
$\mathbb{P}(\bigwedge^{3}\mathbb{C}^{14})$ given by the equations
from Corollary~\ref{theorem:key-cor} for various $1 \leqslant i
\leqslant 7$. Then we get
$$
S \subseteq G\big(3,14\big) \cap \Lambda \cap \big(\lambda =
\sigma_{1} = \sigma_{2} = \sigma_{3} = 0\big) =: \widehat{S}
$$
by construction.

Fix $s_{ij}$ as in Corollary~\ref{theorem:key-cor} once and for
all. Let $U \subset G(3,14)$ be the open subset on which the
sections $ts_{i1}\wedge ts_{i2}\wedge ts_{i3}$, $1\leqslant
i\leqslant 7$, do not vanish simultaneously (i.\,e. $U =
G(3,14)\setminus\Pi$ as in Theorem~\ref{theorem:main-1}).

\begin{lemma}
\label{theorem:key-lemma-2} The surface $S$ coincides with
Zariski closure of the locus $\widehat{S} \cap U$.
\end{lemma}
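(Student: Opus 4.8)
The plan is to establish the two inclusions $S \subseteq \overline{\hat{S} \cap (ts_1 \wedge ts_2 \wedge ts_3 \ne 0)}$ and $\hat{S} \cap (ts_1 \wedge ts_2 \wedge ts_3 \ne 0) \subseteq S$ separately; since $S$ is closed, the second yields $\overline{\hat{S} \cap (ts_1 \wedge ts_2 \wedge ts_3 \ne 0)} \subseteq S$ and we are done. Here $ts_1 \wedge ts_2 \wedge ts_3 \in \bigwedge^{3}H^{0}(S,\tilde{E_{3}}) = (\bigwedge^{3}\mathbb{C}^{14})^{\vee}$ is read as a Pl\"ucker linear form on $\mathbb{P}(\bigwedge^{3}\mathbb{C}^{14})$, so $(ts_1 \wedge ts_2 \wedge ts_3 \ne 0)$ is the complement of a hyperplane $\Pi$. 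The first inclusion is the easy one: restricted to $S$, the section $ts_1 \wedge ts_2 \wedge ts_3$ equals $t^{3}\cdot(s_1 \wedge s_2 \wedge s_3)$, whose zero locus is $C$ together with the curve $(s_1 \wedge s_2 \wedge s_3 = 0) \in |H - 4C|$ (use Lemma~\ref{theorem:non-zero-a-b-c} and genericity of the $s_i$). Hence $S \cap (ts_1 \wedge ts_2 \wedge ts_3 \ne 0)$ is dense in $S$ and, lying inside $\hat{S} \cap (ts_1 \wedge ts_2 \wedge ts_3 \ne 0)$, gives the claim after taking closure.

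For the reverse inclusion I would exploit a linear projection. Write $\mathbb{C}^{14} = H^{0}(S,\tilde{E_{3}})^{\vee} = A \oplus B$, where $A$ (resp. $B$) is the annihilator of $\langle \xi_1, \ldots, \xi_7\rangle$ (resp. of $\langle ts_1, \ldots, ts_7\rangle$), so that $A \simeq H^{0}(S,E_{3})^{\vee}$. The projection $\pi : \mathbb{C}^{14} \to A$ along $B$ induces a rational map $\pi : G(3,14) \dashrightarrow G(3,7)$ defined precisely on $(ts_1 \wedge ts_2 \wedge ts_3 \ne 0)$: there $U \cap B = 0$, so $\pi|_{U}$ is injective. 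A direct check of the construction in Remark~\ref{remark:when-e-gives-a-morphism} shows $\pi \circ \Phi_{\tilde{E_{3}}} = \Phi_{E_{3}}$ on $S \setminus C$, since multiplication by $t(x) \ne 0$ does not change the subspace $(E_{3})_{x}^{\vee}$. Moreover, as $\lambda$ and $\sigma_{r}$ were built in the construction preceding this lemma out of the $ts_i$ only, their values at a point $U$ depend on $\pi(U)$ alone and equal the Mukai forms of Theorem~\ref{theorem:mukai} evaluated at $\pi(U)$. Consequently $\pi$ carries $\hat{S} \cap (ts_1 \wedge ts_2 \wedge ts_3 \ne 0)$ into $G(3,7) \cap (\lambda = \sigma_1 = \sigma_2 = \sigma_3 = 0) = \Phi_{E_{3}}(S)$.

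The heart of the argument is to show that $\pi$ is injective on $\hat{S} \cap (ts_1 \wedge ts_2 \wedge ts_3 \ne 0)$ with image $\Phi_{E_{3}}(S \setminus C)$. Working in the affine chart of $G(3,14)$ where $ts_1 \wedge ts_2 \wedge ts_3 \ne 0$, I represent $U$ by a basis $u_1, u_2, u_3$ normalized by $ts_i(u_j) = \delta_{ij}$ for $i,j \le 3$; the fibre of $\pi$ over $U' := \pi(U)$ is then the affine space $\mathrm{Hom}(U', B)$ recording the $B$-components of the $u_j$. Evaluating the linear forms of Proposition~\ref{theorem:key-lemma} and Corollary~\ref{theorem:key-cor} on $u_1 \wedge u_2 \wedge u_3$ reduces, by expanding the corresponding $3 \times 3$ determinant, to reading off exactly these $B$-components; and since the right-hand sides $t^{3}\lambda_i, t^{3}\lambda'_i, t^{3}\lambda''_i$ involve only the $ts_j$, they are fixed regular functions of $U'$. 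Thus the equations of $\Lambda$ pin every one of the $3 \times 7$ coordinates of $\mathrm{Hom}(U', B)$, each fibre becomes a single point, and $\pi$ admits a regular section $\Sigma$ over $\Phi_{E_{3}}(S)$. Because $\Sigma \circ \Phi_{E_{3}} = \Phi_{\tilde{E_{3}}}$ already holds on $S \setminus C$ and both sides are morphisms, they agree on all of $S$, so every point of $\hat{S} \cap (ts_1 \wedge ts_2 \wedge ts_3 \ne 0)$ has the form $\Phi_{\tilde{E_{3}}}(x) \in S$.

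The main obstacle I expect is exactly this fibre computation, together with the bookkeeping around $C$: one must verify that the three determinantal identities really recover \emph{all} $3 \times 7$ coordinates of $\mathrm{Hom}(U', B)$ (so that $\Lambda$ cuts the fibre to a point and not to a positive-dimensional locus), and that the section $\Sigma$ extends regularly across $\Phi_{E_{3}}(C)$ — legitimate because $s_1 \wedge s_2 \wedge s_3 \ne 0$ along $C$ by Lemma~\ref{theorem:non-zero-a-b-c}. It is worth stressing that $\hat{S}$ itself may carry additional, higher-dimensional components, but these are forced to lie inside the hyperplane $\Pi$; this is precisely why the statement is phrased via the Zariski closure of $\hat{S} \cap (ts_1 \wedge ts_2 \wedge ts_3 \ne 0)$.
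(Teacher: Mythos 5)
Your proof is correct and follows essentially the same route as the paper's: on the locus $ts_1 \wedge ts_2 \wedge ts_3 \ne 0$ the linear equations defining $\Lambda$ solve for the $\xi_i$-coordinates in terms of the $ts_i$-coordinates, so that $\hat{S} \cap (ts_1 \wedge ts_2 \wedge ts_3 \ne 0)$ is identified (via your projection $\pi$ to $G(3,7)$) with an open subset of the genus-$12$ model of Theorem~\ref{theorem:mukai} and hence lies in $S$, while density of $S \cap (ts_1 \wedge ts_2 \wedge ts_3 \ne 0)$ in $S$ gives the other inclusion. The paper compresses all of this into two sentences, and your explicit fibre computation in $\mathrm{Hom}(U',B)$ is precisely the content of its assertion that the components of the $\xi_i(x)$ are uniquely determined by those of the $ts_i(x)$.
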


\begin{proof}
Let us regard the sections $ts_{i},\xi_i$, $1 \leqslant i
\leqslant 7$, as vector\,-\,functions on $\widehat{S} \cap U =:
S^0$. Then, given $x\in S^0$, the construction of $\Lambda$
implies that the components of vectors $\xi_i(x)$ are uniquely
determined (via ``\,Kramer\,-\,type\,'' relations) by the
components of $s_{i}(x)$, $1 \leqslant i \leqslant 7$.

Now, restricting the natural (forgetful) map $\pi:
G(3,14)\dashrightarrow G(3,7)$ (mapping all $\xi_i$ to zero) onto
$S^0$, from the conditions $\lambda(x) = \sigma_{1}(x) =
\sigma_{2}(x) = \sigma_{3}(x) = 0$ we obtain that
$\pi\big\vert_{S^0}:S^0\dashrightarrow\pi(S^0) = S\subset G(3,7)$
is a \emph{birational} map onto a $\mathrm{K3}$ surface of genus
$12$. This implies that $S^0 \subset S$ is a surface and the
result follows.

\end{proof}

\medskip

Lemma~\ref{theorem:key-lemma-2} proves
Theorem~\ref{theorem:main-1}.

\bigskip

\section{Proof of Theorem~\ref{theorem:main}}
\label{section:eq-2}

\refstepcounter{equation}
\subsection{}

We use the same notation as in
Sections~\ref{section:preliminaries} and \ref{section:eq-1}. Let
us also fix $ts_1,\ldots,ts_7$ and $\xi_1,\ldots,\xi_7$ in what
follows as corresponding to a particular embedding
$G(3,14)\subset\mathbb{P}\big(\bigwedge^{3}\mathbb{C}^{14}\big)$.

We begin with the next supplementary

\begin{lemma}
\label{theorem:irred-comps} The irreducible (and reduced)
components of the scheme $G_{\Lambda} := G(3,14)\cap\Lambda$ are
as follows:

\begin{itemize}

    \item Zariski closure of $G_{\Lambda}\setminus\Pi$;

    \smallskip

    \item the schemes $G_m$ of dimension $12$, given (outside the rest of irreducible components of $G_{\Lambda}$) by the vanishing of various
$m$\,-\,tuples, $1\leqslant m\leqslant 7$, of sections
$ts_{i1}\wedge ts_{i2}\wedge ts_{i3}$;

    \smallskip

    \item other irreducible components contained in $I$, the indeterminacy locus of
$\pi: G(3,14)\dashrightarrow G(3,7)$ (cf. the proof of
Lemma~\ref{theorem:key-lemma-2}), given by the vanishing of all
sections $ts_i\wedge ts_j\wedge ts_k$.

\end{itemize}

\end{lemma}

\begin{proof}
The first item is clear because $G_{\Lambda}\setminus\Pi$ is
birational to $G(3,7)$. As for the remaining items, we recall that
the tuples $(ts_{ik},ts_{ik'})$, $1 \le k,k' \le 3$, are all
distinct for different $\xi_i$ (see
Remark~\ref{remark:i-not-j-tup}).

Further, on the open chart where any two of $ts_{ik} \wedge
ts_{ik'} \ne 0$, intersection $G_{\Lambda}\cap I$ has dimension
$20$, being a union of $(\mathbb{P}^1)^4$\,-\,bundles $I_{\alpha}$
over some (fixed) determinantal locus $\Gamma\subset G(3,14)$.
Now, if $ts_{11}\wedge ts_{12} = ts_{12}\wedge ts_{13} = 0$, say,
then one gets a union of
$\mathbb{P}^2\times(\mathbb{P}^1)^4$\,-\,bundles over a
codimension $6$ subset in $\Gamma$. Hence this is a \emph{proper}
closed subset in $G_{\Lambda}\cap I$ and so $G_{\Lambda}\cap I$
does not acquire any other irreducible components, besides
$I_{\alpha}$, whenever any two of $ts_{ik}\wedge ts_{ik'}$ vanish.
(The case when one (resp. two) of $ts_{ij}$ vanishes is similar
--- one gets a $\mathbb{P}^2\times(\mathbb{P}^1)^4$\,-\,bundle
(resp. $\mathbb{P}^3\times(\mathbb{P}^1)^4$\,-\,bundle) over a
codimension $3$ (resp. $6$) subset in $\Gamma$.)

Similarly, every $G_m\setminus I$ is a
$(\mathbb{P}^1)^m$\,-\,bundle over a \emph{smooth}, by generality
of $s_i$, codimension $m$ linear section $\Lambda_m$ of $G(3,7)$.
Indeed, this can be seen for $G_m\setminus I$ on the open chart
where any two of $ts_i\wedge ts_j\ne 0$, by ``\,solving the linear
equations\,'' for $\Lambda$. Now, if again some $ts_{i1}\wedge
ts_{i2} = ts_{i2}\wedge ts_{i3} = 0$, then one gets a
$\mathbb{P}^2\times (\mathbb{P}^1)^{m-1}$\,-\,bundle over a
codimension $5 + m$ subset in $G(3,7)$. Hence this is a proper
subset in $G_m$ and so $G_m$ does not acquire any irreducible
components (out of $I$) whenever any two of $ts_{ik}\wedge
ts_{ik'}$ vanish. (The case when some of $ts_{ij}$ vanish is
treated similarly.)

Finally, the loci $G_{\Lambda}\setminus\Pi$, $G_m$ and
$G_{\Lambda}\cap I_{\alpha}$ define a stratification on
$G_{\Lambda}$ by constructible irreducible subsets whose Zariski
closures remain irreducible. Thus all these constitute the
components of $G_{\Lambda}$.

\end{proof}

\medskip

\begin{remark}
\label{remark:pi-space} From (the proof of)
Lemma~\ref{theorem:irred-comps} we obtain a description of all
irreducible components of $\widehat{S}$. Namely, they are the
surface $S$, (some of) the $20$\,-\,dimensional schemes
$\widehat{S}\cap I_{\alpha} = G_{\Lambda}\cap I_{\alpha}$ (with
$\lambda$ and $\sigma_i$ being expressed via polynomials in
$ts_i\wedge ts_j\wedge ts_k$), or the loci $\widehat{S}_m :=
\widehat{S} \cap G_m$, which are $(\mathbb{P}^1)^m$\,-\,bundles
over $S\cap\Lambda_m$ (for $S$ being identified with
$\Phi_{\scriptscriptstyle E_3}(S)\subset G(3,7)$). In particular,
again by generality of $s_i$, the schemes $\widehat{S}_m$ are all
smooth and $m\leqslant 2$. This also shows that
$\widehat{S}\setminus I$ is a complete intersection (of pure
dimension $2$).

\end{remark}

\medskip

\refstepcounter{equation}
\subsection{}

Consider the rigid vector bundle $\widehat{E}_3$ on the general
$\mathrm{K3}$ surface $(S_{64},L_{64})$ of genus $33$. We have
$S_{64} = \Phi_{\scriptscriptstyle L_{64}}(S_{64}) \subset G(3,14)
\cap \mathbb{P}^{33}$ and $\widehat{E}_3 =
\mathcal{E}_{14}\big\vert_{S_{64}}$ by
Theorem\,-\,definition~\ref{theorem:existence-if-rigid-v-b}. In
particular, the pair $(S_{64}, \widehat{E}_3)$ varies in a flat
family, specializing to $(S, \widetilde{E}_3)$. Identify $S_{64}$
with the general fiber of the corresponding universal fibration
over a base $\mathcal{F}$ and denote by $S_{64}\slash\mathcal{F}$
the entire family. Similarly, considerations from
{\ref{subsection:pro-a-0}} yield a subvariety
$\mathcal{F}^{\,\text{R}} \subset \mathcal{F}$ of codimension $1$
(cf. Remark~\ref{remark:direct-proof}), so that the induced
subfamily $S\slash\mathcal{F}^{\,\text{R}} \subset
S_{64}\slash\mathcal{F}$ has $S$ as its general fiber.

The linear projection $\pi: G(3,14) \dashrightarrow G(3,7)$
induces a rational map
$$
\mathcal{F} \dashrightarrow
\big(\bigwedge^{3}H^{0}\big(S,E_{3}\big) \oplus
\big(\bigwedge^{2}H^{0}\big(S,E_{3}\big)\big)^{\oplus\,3}\big)
\supset \mathcal{F}^{\,\text{R}}
$$
which we again denote by $\pi$. Note that by construction
$\pi(\mathcal{F}^{\,\text{R}}) = \mathcal{F}^{\,\text{R}}$ at the
general point.

\begin{lemma}
\label{theorem:s-64-hat-vs-i-xxx} One has (generically)
$\pi(\mathcal{F}) = \mathcal{F}^{\,\mathrm{R}}$. In particular, a
typical $\pi$\,-\,fiber is an irreducible curve in $\mathcal{F}$,
and $\mathcal{F}^{\,\mathrm{R}}$ is a $\pi$\,-\,section.
\end{lemma}

\begin{proof}
Firstly, the restriction $\pi\big\vert_{\mathcal{F}^{\,\text{R}}}$
is smooth because, again by construction, it induces an
isomorphism on the tangent space of $\mathcal{F}^{\,\text{R}}$.

Further, if $\pi(\mathcal{F}) \ne \mathcal{F}^{\,\mathrm{R}}$ and
$\pi^{-1}(\,\mathcal{F}^{\text{R}}) \supset
\mathcal{F}^{\,\text{R}}$ strictly, then $\pi$ is (generically)
finite on $\mathcal{F}$ and there is a point in
$\pi^{-1}(\mathcal{F}^{\,\text{R}})\setminus\mathcal{F}^{\,\text{R}}$
such that the corresponding $\mathrm{K3}$ surface $S'$ admits a
rational dominant map
$$
S' \dashrightarrow S = \Phi_{\scriptscriptstyle E_3}(S)\subset
G(3,7)
$$
induced by $\pi$. Indeed, since
$\mathbb{C}(\mathcal{F})\slash\mathbb{C}(\pi(\mathcal{F}))$ is a
finite field extension generated by one primitive element, there
exists $\lambda'\in\bigwedge^{3}H^{0}\big(S,E_{3}\big)$ (resp.
$\sigma'_1, \sigma'_2,
\sigma'_3\in\bigwedge^{2}H^{0}\big(S,E_{3}\big)$) such that $S'$
coincides with Zariski closure of the locus
$$
G\big(3,14\big) \cap \Lambda \cap \big(\big(\lambda -
\lambda'\big) = \big(\sigma_{1} - \sigma'_1\big) = \big(\sigma_{2}
- \sigma'_2\big) = \big(\sigma_{3} - \sigma'_3\big) = 0\big) \cap
U
$$
(cf. Lemma~\ref{theorem:key-lemma-2}).

The restriction $\pi\big\vert_{S'}$ is given by some linear
subsystem in $|\mathcal{O}_{S'}(1)|$. Now, composing
$\pi\big\vert_{S'}$ with embedding $S =
\Phi_{\scriptscriptstyle\widetilde{E}_3}(S)\subset G(3,14)$, we
get a rational projection $S' \dashrightarrow S$ between two
$\mathrm{K3}$ in $\mathbb{P}^{33}$. Hence (non\,-\,degenerate)
$S'$ and $S$ differ by some projective automorphism. It is then
immediate that $S'$ belongs to the family
$S\slash\mathcal{F}^{\,\text{R}}$ and this is a contradiction.

Thus, under the assumption that $\pi(\mathcal{F}) \ne
\mathcal{F}^{\,\mathrm{R}}$, the map $\pi$ must be birational on
$\mathcal{F}$ (with $\pi^{-1}(\mathcal{F}^{\,\text{R}}) =
\mathcal{F}^{\,\text{R}}$). In this case, the above arguments show
that generic surface $S' := S_{64}$ projects birationally onto a
$\mathrm{K3}$ of genus $12$ (cf.
Remark~\ref{remark:s-12-is-unirational}), which is obviously
impossible.

We conclude that $\pi(\mathcal{F}) = \mathcal{F}^{\,\text{R}}$ is
the only option.

\end{proof}

One may assume w.\,l.\,o.\,g. that $S_{64}\subset\Lambda$ for
$S_{64} \subset \mathbb{P}^{33}\supset S$ and $\Lambda\subset
H^{0}\big(G\big(3, 14\big), \bigwedge^{3}\mathcal{E}_{14}\big)$
has codimension $21$. Note also that
$\bigwedge^2H^0(S,\widetilde{E}_3)$ generates a submodule
$\mathcal{O}_S^{\oplus\, N}\subseteq\bigwedge^2\widetilde{E}_3$,
some $N > 1$, which extends to a vector bundle on $\mathcal{F}$.
Hence there exist global sections
$$
\widehat{\lambda} \in H^{0}\big(G\big(3, 14\big),
\bigwedge^{3}\mathcal{E}_{14}\big)
$$
and
$$
\widehat{\sigma}_1, \widehat{\sigma}_2, \widehat{\sigma}_3 \in
H^{0}\big(G\big(3, 14\big), \bigwedge^{2}\mathcal{E}_{14}\big)
$$
such that $S_{64}$ is contained in the Zariski closure of the
locus $\widehat{S}_{64} \cap U$, where
\begin{equation}
\nonumber \widehat{S}_{64} := G_{\Lambda} \cap
\big(\widehat{\lambda} = \widehat{\sigma}_1 = \widehat{\sigma}_2 =
\widehat{\sigma}_3 = 0\big)
\end{equation}
(compare with Lemma~\ref{theorem:key-lemma-2}).

The family $S_{64}\slash\mathcal{F}$ extends to an algebraic
family $\widehat{S}_{64}\slash\widehat{\mathcal{F}}\supseteq
S_{64}\slash\mathcal{F}$ (with
$\widehat{\mathcal{F}}\supseteq\mathcal{F}$ and conventions on the
notation as earlier).

\begin{lemma}
\label{theorem:s-64-hat-vs-i} Universal fibration
$\widehat{S}_{64}\slash\widehat{\mathcal{F}} \longrightarrow
\widehat{\mathcal{F}}$ is flat at the general point.
\end{lemma}

\begin{proof}
Cutting with hyperplanes we reduce to the $2$\,-\,dimensional
case. Thus suppose there is a smooth projective surface $V$ and an
algebraic family of schemes $Z_t\subset V$. One may assume the
$0$\,-\,dimensional part of all $Z_t$ to have the same length and
local multiplicities. Then, since the Hilbert polynomial (of a
curve) has only $\mathbb{Z}$\,-\,coefficients, we obtain that the
$1$\,-\,dimensional part of $Z_t$ varies algebraically with $t$.
Summing up, all $Z_t$ have the same Hilbert polynomial, and the
claim follows.

\end{proof}

According to Lemma~\ref{theorem:s-64-hat-vs-i} we may assume the
family $\widehat{S}_{64}\slash\widehat{\mathcal{F}}$ to be flat
(for taking the \emph{flat closure} still leaves
$S_{64}\slash\mathcal{F}$ as a (flat) subfamily). One can also
restrict to $\widehat{\mathcal{F}} = \mathcal{F}$ and identify
$\mathcal{F}$ with $\mathcal{K}_{33}$ via the Luna's slice theorem
(for $S$, $S_{64}$, etc. being defined up to $\text{Aut}\,G(3,14)
= PGL(14,\mathbb{C})$).

\begin{lemma}
\label{theorem:common-open-u} There exists, same for all
$\mathrm{K3}$ surfaces in $S_{64}\slash\mathcal{F}$, Zariski open
subset $U_0\subset G(3,14)$ such that $\widehat{S}\cap U_0 = S
\cap U_0$ and $\widehat{S}_{64}\cap U_0 = S_{64} \cap U_0$.
\end{lemma}

\begin{proof}
Flatness and Remark~\ref{remark:pi-space} imply that all
irreducible components of $\widehat{S}_{64}\slash\mathcal{F}$ are
either (contained in) various $G_{\Lambda}\cap I_{\alpha}$, or
coincide with some $(\mathbb{P}^1)^m$\,-\,bundles over
$S_{64}\slash\mathcal{F}$, $m \le 2$. In particular, the dimension
of these components is at most
$$
\max\left\{20, 19 + 2 + 2\right\} = 23.
$$

Further, all irreducible components of $\widehat{S}_{64}$ (resp.
$\widehat{S}$) different from $S_{64}$ (resp. $S$) vary in a
\emph{flat} subfamily, since these residual members have the same
Hilbert polynomial.

Thus all these members sweep out a \emph{proper} subset in
$G(3,14)$ of dimension $\leqslant 23$. Removing this yields the
needed $U_0$.

\end{proof}

It follows from Lemma~\ref{theorem:s-64-hat-vs-i-xxx} that one can
write $\widehat{\lambda} = \lambda + \lambda^0$ with $\lambda^0$
being parameterized by an algebraic curve $Z$ (over
$\mathbb{C}(\mathcal{F}^{\,\text{R}})$). Similarly, one has
$\widehat{\sigma}_i = \sigma_{i} + \sigma_i^0$, with
$Z$\,-\,parameterized $\sigma_i^0$.

Fix some $\lambda'\in H^0(G(3,7),\bigwedge^3\mathcal{E}_7)$,
$\sigma_i'\in H^0(G(3,7),\bigwedge^2\mathcal{E}_7)$, $1 \le i \le
3$, and a map $f: Z \longrightarrow \mathbb{P}^1$. Then associate
to $S_{64}$ a $\mathrm{K3}$ surface $S'_{64} \subset G(3,7)$ of
genus $12$ given by $\lambda + f(\lambda,z)\lambda' = \sigma_1 +
f(\lambda,z)\sigma'_1 = \ldots = 0$ as usual (for $(\lambda,z)\in
Z$ corresponding to $S_{64}$). This and
Lemma~\ref{theorem:common-open-u} deliver a rational dominant map
$\phi:\mathcal{K}_{33}\dashrightarrow\mathcal{K}_{12}$.

Indeed, the fact that $\phi$ is dominant follows from
Proposition~\ref{theorem:r-polarized} and simple dimension count,
whereas the next result shows that

\begin{lemma}
\label{theorem:cor-def-phi} $\phi$ is correctly defined.
\end{lemma}

\begin{proof}
Note that the collection of $\Lambda$, $\lambda$, $\sigma_i$ is
defined up to the automorphisms in $PGL(14,\mathbb{C})$. Then the
family $\mathcal{F}$ and the locus $U_0$ from
Lemma~\ref{theorem:common-open-u} are defined up to
$PGL(14,\mathbb{C})$ as well. Hence by the Luna's slice theorem it
suffices to show that any $\tau\in PGL(14,\mathbb{C})$, with
$\tau(\mathcal{F}) = \mathcal{F}$ and $\tau(U_0) = U_0$, induces
naturally an automorphism on $G(3,7)$ (thus mapping
$S'_{64}\subset G(3,7)$ to isomorphic surface). But the latter is
evident according to Lemma~\ref{theorem:irred-comps} and the
assumption on $\tau$.

\end{proof}

\begin{proposition}
\label{theorem:shape-of-s-64} $\phi$ is birational.
\end{proposition}

\begin{proof}
Note that the restriction of $\phi$ to the hypersurface
$\mathcal{K}_{33}^{\,\mathrm{R}}$ (cf.
Remark~\ref{remark:direct-proof}) is smooth because by
construction $\phi$ induces an isomorphism there on the tangent
spaces. In addition,
$\phi\big\vert_{\scriptscriptstyle\mathcal{K}_{33}^{\,\mathrm{R}}}$
is one\,-\,to\,-\,one onto its image, and hence it suffices to
show that $\phi^{-1}\phi(S) = S$.

Let $S_{64}\in \phi^{-1}\phi(S)$ be another $\mathrm{K3}$. Then
the corresponding surface $S'_{64} \in \mathcal{K}_{12}$ has
$f(\lambda,z) = 0$ (with notation as above). This gives a rational
dominant map $S_{64}\dashrightarrow S = \Phi_{\scriptscriptstyle
E_3}(S)\subset G(3,7)$ induced by the projection $\pi: G(3,14)
\dashrightarrow G(3,7)$. Here the restriction
$\pi\big\vert_{S_{64}}$ is given by some linear subsystem in
$|\mathcal{O}_{S_{64}}(1)|$. Now, composing
$\pi\big\vert_{S_{64}}$ with embedding $S =
\Phi_{\scriptscriptstyle\widetilde{E}_3}(S) \subset G(3,14)$, we
get a rational projection $S_{64}\dashrightarrow S$ between two
$\mathrm{K3}$ in $\mathbb{P}^{33}$. This is only possible if
$S_{64}$ and $S$ were isomorphic (cf. the proof of
Lemma~\ref{theorem:s-64-hat-vs-i-xxx}).

Thus $\phi^{-1}\phi(S) = S$ as wanted.

\end{proof}

Finally, Theorem~\ref{theorem:main} follows from
Lemma~\ref{theorem:cor-def-phi},
Proposition~\ref{theorem:shape-of-s-64} and
Remark~\ref{remark:s-12-is-unirational}.

\bigskip

\begin{remark}
\label{remark:genus-36} We can run similar arguments as in the
proof of Lemma~\ref{theorem:wedge-e-3} for the vector bundle $E_4$
on $S$ (cf. Remarks~\ref{remark:dual-construction} and
\ref{remark:dual-construction-1}) to show that $E_4 \otimes
\mathcal{O}_{S}(C)$ is a rigid rank $4$ vector bundle such that
$c_1(E_4 \otimes \mathcal{O}_{S}(C)) = H$ and $\chi(S, E_4 \otimes
\mathcal{O}_{S}(C)) = 13$. Then, similarly as in the proof of
Theorem~\ref{theorem:main-1}, one obtains
$$
S \simeq \Phi_{\scriptscriptstyle E_4 \otimes
\mathcal{O}_{S}(C)}(S) \subset G(4, 13) \cap \mathbb{P}^{36}
$$
for $\mathcal{O}_{S}(H) \simeq \mathcal{O}_{G(4,
13)}(1)\big\vert_{S}$ and $S = \Phi_{\scriptscriptstyle E_4
\otimes \mathcal{O}_{S}(C)}(S)$. Now using
Remark~\ref{remark:dual-construction} we prove that $S \subset
G(4, 13) \cap \mathbb{P}^{36}$ coincides with Zariski closure of
the locus $\widetilde{S}\setminus{\Pi}$ inside
$$
\widetilde{S} := \Lambda^* \cap (\lambda^* = 0) \cap (\sigma_{1}^*
= \sigma_{2}^* = \sigma_{3}^* = 0)
$$
for some $\lambda^* \in H^{0}(G(4,
13),\bigwedge^{4}\mathcal{E}_{13})$, $\sigma_{i}^* \in H^{0}(G(4,
13),\bigwedge^{2}\mathcal{Q}_{13}^{*})$, where $\Pi$ and
$\Lambda^*$ are fixed projective subspaces in
$\mathbb{P}(\bigwedge^{4}\mathbb{C}^{13})$, $\mathcal{E}_{13}$ is
the universal vector bundle on $G(4, 13)$, and
$\mathcal{Q}_{13}^*$ is the dual of the universal quotient vector
bundle on $G(4, 13)$. Finally, applying similar arguments as in
the proof of Theorem~\ref{theorem:main}, one can show that the
same holds for any BN general polarized $\mathrm{K3}$ surface of
genus $36$, what again leads to the unirationality of
$\mathcal{K}_{36}$. Let us stress however that the present
strategy does not apply directly to study the unirationality of
$\mathcal{K}_{21}$ and $\mathcal{K}_{28}$ (cf.
Remark~\ref{remark:direct-proof}) because in these two cases there
is no apparent relation between the rigid vector bundles on the
corresponding polarized $\mathrm{K3}$ and those on the genus $12$
surfaces (compare with Lemma~\ref{theorem:wedge-e-3}).
\end{remark}

\bigskip

\end{document}